\begin{document}
\theoremstyle{plain}
\newtheorem{thm}{Theorem}[section]
\newtheorem{theorem}[thm]{Theorem}
\newtheorem*{theorem2}{Theorem}
\newtheorem{lemma}[thm]{Lemma}
\newtheorem{corollary}[thm]{Corollary}
\newtheorem{corollary*}[thm]{Corollary*}
\newtheorem{proposition}[thm]{Proposition}
\newtheorem{proposition*}[thm]{Proposition*}
\newtheorem{conjecture}[thm]{Conjecture}
\theoremstyle{definition}
\newtheorem{construction}[thm]{Construction}
\newtheorem{notations}[thm]{Notations}
\newtheorem{question}[thm]{Question}
\newtheorem{problem}[thm]{Problem}
\newtheorem{remark}[thm]{Remark}
\newtheorem{remarks}[thm]{Remarks}
\newtheorem{definition}[thm]{Definition}
\newtheorem{claim}[thm]{Claim}
\newtheorem{assumption}[thm]{Assumption}
\newtheorem{assumptions}[thm]{Assumptions}
\newtheorem{properties}[thm]{Properties}
\newtheorem{example}[thm]{Example}
\newtheorem{comments}[thm]{Comments}
\newtheorem{blank}[thm]{}
\newtheorem{observation}[thm]{Observation}
\newtheorem{defn-thm}[thm]{Definition-Theorem}

\def\supp{\operatorname{supp}}


\title[On a formula of Gammelgaard for Berezin-Toeplitz quantization]{On a formula of Gammelgaard for Berezin-Toeplitz quantization}

\author{Hao Xu}
        \address{Center of Mathematical Sciences, Zhejiang University, Hangzhou, Zhejiang 310027, China;
        Department of Mathematics, Harvard University, Cambridge, MA 02138, USA}
        \email{haoxu@math.harvard.edu}

        \begin{abstract}
        We give a proof of a slightly refined version of Gammelgaard's graph theoretic formula for Berezin-Toeplitz quantization on
        (pseudo-)K\"ahler manifolds.
        Our proof has the merit of giving an alternative approach to
        Karabegov-Schlichenmaier's identification theorem. We also identify the dual Karabegov-Bordemann-Waldmann star product.
        \end{abstract}

\keywords{Berezin-Toeplitz quantization, Berezin transform, Karabegov form}
\thanks{{\bf MSC(2010)}  53D55 32J27}

    \maketitle

\section{Introduction}

Deformation quantization on a symplectic manifold $M$ was introduced by Bayen et al. \cite{BFF} as a deformation of the usual
pointwise product of $C^{\infty}(M)$ into a noncommutative associative
$\star$-product of the formal series $C^\infty(M)[[\nu]]$. The celebrated work of
Kontsevich \cite{Kon} completely solved existence and classification of star-products
on general Poisson manifolds. See \cite{DS} for a comprehensive survey of deformation quantization on
symplectic and Poisson manifolds.

This paper will restrict to study differentiable deformation quantization with separation of variables on a K\"ahler manifold
(see below for definitions).
Let $(M,g)$ be a K\"ahler manifold of dimension $n$. On a coordinate
chart $\Omega$, the K\"ahler form is given by
$$\omega_g=\sqrt{-1}\sum_{i,j=1}^n
g_{i\overline{j}}dz_i\wedge dz_{\overline{j}}.$$ If $\Omega$ is
contractible, there exists a K\"{a}hler potential $\Phi$ satisfying
$$\partial\overline\partial\Phi=\sum_{i,j=1}^n
g_{i\overline{j}}dz_i\wedge dz_{\overline{j}}.$$

Around any point $x\in M$, there exists a normal
coordinate system such that
\begin{equation} \label{eqnormal}
g_{i\bar j}(x)=\delta_{ij}, \qquad g_{i\bar j k_1\dots
k_r}(x)=g_{i\bar j \bar l_1\dots\bar l_r}(x)=0
\end{equation}
for all $r\leq N\in \mathbb N$, where $N$ can be chosen arbitrary
large and $g_{i\bar j k_1\dots k_r}=\partial_{k_1\dots k_r}g_{i\bar
j}$.

The canonical Poisson bracket of two functions $f_1,f_2\in C^\infty(M)$ is given by
\begin{equation}
\{f_1,f_2\}=i g^{k\bar l}\left(\frac{\partial f_1}{\partial
z^k}\frac{\partial f_2}{\partial\bar z^l}-\frac{\partial
f_2}{\partial z^k}\frac{\partial f_1}{\partial\bar z^l}\right).
\end{equation}

Let $C^\infty(M)[[\nu]]$ denote the algebra of formal power series in
$h$ over $C^\infty(M)$. A star product is an associative $\mathbb
\mathbb C[[\nu]]$-bilinear product $\star$ such that $\forall
f_1,f_2\in C^\infty(M)$,
\begin{equation} \label{eqberdef}
f_1\star f_2=\sum_{j=0}^\infty \nu^j C_j(f_1,f_2),
\end{equation}
where the $\mathbb C$-bilinear operators $C_j$ satisfy
\begin{equation}
C_0(f_1,f_2)=f_1 f_2,\qquad C_1(f_1,f_2)-C_1(f_2,f_1)=i\{f_1,f_2\}.
\end{equation}

A star product is called \emph{differentiable}, if each $C_j$ is a bidifferential operator.
According to Karabegov \cite{Kar}, a star product has the property of \emph{separation of variables} (\emph{Wick type}), if it
satisfies $f\star h=f\cdot h$ and $h\star g=h\cdot g$ for any
locally defined antiholomorphic function $f$, holomorpihc function
$g$ and an arbitrary function $h$. If the role of holomorphic and antiholomorphic variables are swapped, we call it a star
product of \emph{anti-Wick type}. In particular, the Berezin-Toeplitz star product is of Wick type and the Berezin star product is of anti-Wick type.
They are dual opposite to each other.

Motivated by the work of Reshetikhin and Takhtajan \cite{RT}, Gammelgaard \cite{Gam} obtained a remarkable universal formula for any
star product with separation of variables corresponding to a given
classifying Karabegov form.
Using
Karabegov-Schlichenmaier's identification theorem \cite{KS} for Berezin-Toeplitz quantization (cf. Theorem \ref{thmKS}), Gammelgaard's
formula specializes to a graph
expansion for Berezin-Toeplitz quantizaion over acyclic graphs, which can be equivalently formulated in the following theorem. See Remark \ref{rm2}.
\begin{theorem} \label{main} The Berezin-Toeplitz star product $\star_{BT}$ on a K\"ahler manifold is given by
\begin{equation} \label{eqBT}
f_1\star_{BT} f_2(x)=\sum_{\Gamma=(V\cup\{f\},E)\in\mathcal G_{BT}^{ss}}\nu^{|E|-|V|}\frac{(-1)^{|E|}}{|{\rm Aut}(\Gamma)|}
\Gamma(f_1,f_2),
\end{equation}
where $\mathcal G_{BT}^{ss}$ (see \eqref{graphBTss} for the definition) is a certain subset of strongly connected semistable one-pointed
graphs and $\Gamma(f_1,f_2)$ is the partition
function of $\Gamma$ (see Definition \ref{partition2}).
\end{theorem}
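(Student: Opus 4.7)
The plan is to derive (\ref{eqBT}) directly from an asymptotic Feynman-diagrammatic expansion, so that Karabegov-Schlichenmaier's identification (Theorem \ref{thmKS}) emerges as a corollary rather than being invoked as an input. The most convenient route is to work first with the dual Berezin star product of anti-Wick type, which admits an explicit integral representation whose asymptotics are controlled by a Laplace-type integral with phase function the Calabi diastasis $D(x,y)=\Phi(x,y)+\Phi(y,x)-\Phi(x,x)-\Phi(y,y)$; one then transfers the resulting expansion to the Berezin-Toeplitz side.

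First I would pick a normal coordinate system of sufficiently high order at $x$, as afforded by (\ref{eqnormal}), in which $D(x,x+z)=|z|^2+O(|z|^3)$. Treating the quadratic part as a flat Gaussian with propagator $\nu\,\delta^{ij}$ and expanding the higher Taylor coefficients of $D$, of the Riemannian volume density, and of $f_1,f_2$ as interaction vertices, Wick's theorem produces a graph sum. Each edge contributes a factor $-\nu$, giving $(-\nu)^{|E|}$; the loop counting of the Gaussian integral combined with the $\nu$-prefactors in the defining integral yields the net power $\nu^{|E|-|V|}$; the marked vertex $f$ carries the insertions $f_1,f_2$; and $1/|{\rm Aut}(\Gamma)|$ is the standard Feynman symmetry factor. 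Taking the logarithm of the vacuum bubble sum selects connected diagrams, and the vanishing conditions (\ref{eqnormal}) eliminate all tadpoles and unstable subgraphs, leaving the class of strongly connected semistable one-pointed graphs.

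Finally, passing from the Berezin to the Berezin-Toeplitz expansion corresponds at the graph level to reversing edge orientations and restricting to the subclass $\mathcal G_{BT}^{ss}$, yielding (\ref{eqBT}). Comparing the outcome with Gammelgaard's universal template then reads off the Karabegov form of $\star_{BT}$ and recovers Theorem \ref{thmKS} as a byproduct. The main obstacle will be the combinatorial step of matching the surviving graph class to $\mathcal G_{BT}^{ss}$ on the nose, including the precise signs and automorphism weights coming from the Feynman expansion; a secondary difficulty is establishing independence of the graph sum from the chosen K\"ahler potential and coordinate chart, which is what upgrades the local Feynman calculation to the global statement asserted here and accounts for the \emph{slight refinement} over Gammelgaard's original formulation noted in the abstract.
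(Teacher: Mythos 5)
There is a genuine gap, and it sits precisely where you flag ``the main obstacle.'' Your Laplace/Feynman expansion of the Berezin side is essentially a re-derivation of the known formula \eqref{eqb3} for the Berezin transform, and what such an expansion actually produces is a sum over \emph{all} strongly connected one-pointed graphs $\mathcal G_1^{scon}$ with weights $\det(A(\Gamma_-)-I)/|{\rm Aut}(\Gamma)|$ --- not a sum over $\mathcal G_{BT}^{ss}$ with weights $(-1)^{|E|}/|{\rm Aut}(\Gamma)|$. Your outline never engages with these determinant coefficients, and the normal-coordinate vanishing \eqref{eqnormal} only removes unstable vertices at the center $x$; it neither produces the sign $(-1)^{|E|}$ nor cuts the graph class down to those $\Gamma$ whose strongly connected components of $\Gamma_-$ are single vertices or linear digraphs. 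The step you describe as ``reversing edge orientations and restricting to the subclass $\mathcal G_{BT}^{ss}$'' is not a valid operation: the passage from $\star_B$ to $\star_{BT}$ is conjugation by the Berezin transform, $f_1\star_{BT}f_2=I^{-1}(If_1\star_B If_2)$ as in \eqref{eqbt}, so one must actually invert the graph series \eqref{eqb3}.

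That inversion is the entire content of the paper's proof in \S\ref{BT}: one posits the formula \eqref{eqb5} for $I^{-1}$ over $\mathcal G_{BT}^{scon}$ with coefficient $(-1)^{|E|}$, reduces the verification of $I^{-1}\circ I=\mathrm{id}$ via the orbit-counting Lemma \ref{graph2} to the purely combinatorial identity \eqref{eqg1}, namely $\sum_{H\in\mathcal B(G)}(-1)^{|E(H)|}\det(A((G/H)_-)-I)=0$ for every nontrivial $G\in\mathcal G_1^{scon}$, and proves that identity by combining the coefficient theorem of spectral graph theory (Theorem \ref{ct}) with the contraction--deletion induction of Lemma \ref{graph3}. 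Nothing in your proposal supplies or replaces this cancellation mechanism; you explicitly defer it, so at best you would have reconstructed the Berezin-transform expansion rather than Theorem \ref{main}. Your stated goal of obtaining Karabegov--Schlichenmaier's identification as a byproduct matches the paper's, but note that the paper gets it afterwards in \S\ref{Kara} by verifying $u^k\star z^l-z^l\star u^k=\delta^{kl}$ graph-theoretically (again via a sign cancellation of the form $|E(\dot H)|=|E(\Gamma)|+2$), whereas ``comparing with Gammelgaard's universal template'' risks smuggling the identification theorem back in as an input.
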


In \cite{Xu2}, an explicit formula of Berezin transform in terms of strongly connected graphs was obtained, building on the works
of Engli\v s \cite{Eng}, Loi \cite{Loi}, and related results of Charles \cite{Cha}.
At a first look, acyclic graphs and strongly connected graphs are quite different.
As noted by Schlichenmaier \cite{Sch4},
it shall be interesting to clarify their relations.

We will give a purely graph theoretic proof of Theorem \ref{main} in \S \ref{BT} by developing a technique of computing the inverse to the Berezin transform.
Our proof also provides some clarifications to Gammelgaard's formula and
        Karabegov-Schlichenmaier's identification theorem for Berezin
        and Berezin-Toeplitz star products. In \S \ref{KBW}, we identify the Karabegov form of the dual Karabegov-Bordemann-Waldmann star product.

All graphs in this paper represent partial derivatives
of K\"ahler metrics and functions. The following two identities can be used to convert covariant derivatives of curvature tensors to partial derivatives
of metrics and vice versa.
\begin{gather}\label{eqcur1}
R_{i\bar jk\bar l} =-g_{i\bar j k\bar l}+g^{m\bar p}g_{m\bar j\bar
l}g_{ik\bar p},\\
\label{eqcur2}
T_{\alpha_1\dots\alpha_p;\gamma}=\partial_{\gamma}T_{\alpha_1\dots\alpha_p}-\sum_{i=1}^p
\Gamma_{\gamma\alpha_i}^{\delta}T_{\alpha_1\dots\alpha_{i-1}\delta\alpha_{i+1}\dots\alpha_p},
\end{gather}
where $T_{\alpha_1\dots\alpha_p}$ is a covariant tensor and $\Gamma_{\beta\gamma}^\alpha=0$ except
for $
\Gamma_{jk}^i=g^{i\bar l}g_{j\bar l k},\, \Gamma_{\bar j\bar
k}^{\bar i}=g^{l\bar i}g_{l\bar j\bar k}$.

We want to point out that for graph expressions in this paper,
we can either sum over stable, semistable or all strongly connected graphs.
Their difference is briefly indicated here. For a summation over stable graphs, the equation holds only at the center of
the normal coordinate system, but it is enough to uniquely recover the curvature tensor expression.
For a summation over semistable graphs, the equation holds globally, which is needed when
taking derivatives. Finally, we may enlarge the summation over all strongly connected graphs in order to simplify
the computation of derivatives (see Remark \ref{rm3}).

\

\noindent{\bf Acknowledgements}
The author thanks Professor Kefeng Liu, Hongwei Xu and Shing-Tung Yau
for their kind help during his years of graduate studies at Center of Mathematical Sciences (CMS),
Zhejiang University.
This paper is dedicated to CMS on the occasion of its 10th birthday.

\vskip 30pt
\section{Berezin transform} \label{berezin}

In this section, we briefly recall the asymptotic expansion of Berezin transform and related constructions
of star products. More detailed expositions and historical remarks can be found in recent nice surveys \cite{Sch3, Sch4}.

Let $\Phi(x,y)$ be an almost analytic extension of $\Phi(x)$ to a
neighborhood of the diagonal, i.e. $\bar\partial_x\Phi$ and
$\partial_y\Phi$ vanish to infinite order for $x=y$.
We can assume $\overline{\Phi(x,y)}=\Phi(y,x)$.

For $m > 0$, consider the weighted Bergman space of all
holomorphic function on $\Omega$ square-integrable with respect to
the measure $e^{-m \Phi}\frac{w_g^n}{n!}$.
We denote by $K_m(x,y)$ the reproducing kernel. Locally
\begin{equation}\label{eqb1}
K_m (x,y)\sim e^{m \Phi (x,y)} \sum^\infty_{k=0} B_k (x,y)
m^{n-k},
\end{equation}
which converges if $\Omega$ is a strongly pseudoconvex domain
with real analytic boundary (cf. \cite{Ber, Eng}). For
discussions on the convergence in the compact K\"ahler case, see
\cite{KS,Zel}.

The asymptotic expansion of the Bergman kernel in the setting when $\Omega$ is a compact
K\"ahler manifold was also extensively studied. For recent works, see e.g. \cite{DLM, DK, HM,
LL}.

The \emph{Berezin transform} is the integral operator
\begin{equation} \label{eqber}
I_m f(x)=\int_\Omega
f(y)\frac{|K_m(x,y)|^2}{K_m(x,x)}e^{-m
\Phi(y)}\frac{w_g^n(y)}{n!}.
\end{equation}
At any point for which $K_m (x,x)$ invertible, the integral
converges for each bounded measurable function $f$ on $\Omega$. Note
that \eqref{eqb1} implies that for any $x$, $K_m (x,x)\neq 0$
if $m$ is large enough.

The Berezin transform has an asymptotic expansion for
$m\rightarrow\infty$ (cf. \cite{Eng, KS}),
\begin{equation} \label{eqber3}
I^{(m)} f(x)=\sum^\infty_{k=0} Q_k f(x)m^{-k},
\end{equation}
where $Q_k$ are linear differential operators.

The Berezin star product was introduced by Berezin \cite{Ber}
through symbol calculus for linear operators on weighted Bergman
spaces (cf. \cite{CGR, Eng3, Sch3}). Karabegov \cite{Kar1} noted that
any star product with separation of variables can be constructed from a unique formal
Berezin transform. In our case, if we write
\begin{equation}
Q_j f=\sum_{\alpha,\beta\text{
multiindices}}c_{j\alpha\beta}\partial^{\alpha}\bar\partial^{\beta}f,
\end{equation}
then the coefficients of Berezin star product are given by
bilinear differential operators
\begin{equation} \label{eqkar}
C_j(f_1,f_2):=\sum_{\alpha,\beta}
c_{j\alpha\beta}(\bar\partial^{\beta}f_1)(\partial^{\alpha} f_2).
\end{equation}

The Berezin star product $\star_B$ is equivalent to the Berezin-Toeplitz star
product $\star_{BT}$ via the Berezin transform (cf. \cite{KS})
\begin{equation} \label{eqbt}
f_1\star_{BT} f_2=I^{-1}(I f_1\star_B I f_2),
\end{equation}
where $I:=I^{(1/\nu)}$ is obtained from substituting $m$ by $1/\nu$
in $I^{(m)}$.

Recall that the Toeplitz operator $T^{(m)}_f$ for $f\in C^\infty(M)$
is defined by
\begin{equation}
T^{(m)}_f:=\Pi^{(m)}(f\cdot):\quad H^0(M,L^m)\rightarrow H^0(M,L^m),
\end{equation}
where $\Pi^{(m)}:L^2(M,L^m)\rightarrow H^0(M,L^m)$ is the orthogonal
projection. See \cite{BMS} for a detailed study of their semiclassical properties. The following
celebrated theorem of Schlichenmaier \cite{Sch} shows that
Berezin-Toeplitz operator quantization and deformation quantization are closed related.
\begin{theorem}[\cite{Sch}]\label{sch}
The Berezin-Toeplitz
star product \eqref{eqbt} is the unique star product
\begin{equation}
f_1\star_{BT} f_2:=\sum_{j=0}^\infty \nu^j C^{BT}_j(f_1,f_2),
\end{equation}
such that the following asymptotic expansion holds
\begin{equation} \label{eqbt2}
T_{f_1}^{(m)}T_{f_2}^{(m)} \sim \sum_{j=0}^\infty m^{-j}
T_{C^{BT}_j(f_1,f_2)}^{(m)},\qquad m\rightarrow\infty.
\end{equation}
\end{theorem}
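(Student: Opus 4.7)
The plan is to establish Theorem \ref{sch} in three stages: existence of an asymptotic expansion of the composition $T^{(m)}_{f_1}T^{(m)}_{f_2}$ in Toeplitz operators, verification that the coefficients $C^{BT}_j$ produced by this expansion satisfy the star product axioms, and identification of this star product with the one built via the Berezin transform in \eqref{eqbt}. The key analytic input is the off-diagonal expansion \eqref{eqb1} of the reproducing kernel $K_m(x,y)$ together with stationary-phase / peak-section localization of the integral kernel of $\Pi^{(m)}$ near the diagonal; this is exactly the technology developed in \cite{BMS} that I will use as a black box.

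First I would use the Schwartz kernel description
\[
(T^{(m)}_{f_1}T^{(m)}_{f_2})(x,y)=\int_M\int_M K_m(x,u)\,f_1(u)\,K_m(u,v)\,f_2(v)\,K_m(v,y)\,e^{-m\Phi(u)-m\Phi(v)}\,\frac{\omega^n(u)}{n!}\frac{\omega^n(v)}{n!}
\]
and apply stationary phase in the variables $(u,v)$ around the critical points $u=x$, $v=x$ (using the almost-analytic extension $\Phi(x,y)$ and the Calabi diastasis as phase function) to produce a formal expansion of this kernel in powers of $1/m$, each term being itself the kernel of a Toeplitz operator with a locally computable bidifferential symbol $C^{BT}_j(f_1,f_2)$. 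This step is the main obstacle: one must show that the resulting expansion can be re-summed as an asymptotic expansion \emph{in Toeplitz operators} to all orders, and that the remainders are uniformly controlled in the operator norm. The existence of such a resummation uses a parametrix construction for the Bergman projector and norm bounds $\|T^{(m)}_f\|=\|f\|_\infty+O(1/m)$ from \cite{BMS}.

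Next I would verify that the $C^{BT}_j$ so produced define a star product. Bilinearity is automatic. The axioms
\[
C^{BT}_0(f_1,f_2)=f_1f_2,\qquad C^{BT}_1(f_1,f_2)-C^{BT}_1(f_2,f_1)=i\{f_1,f_2\}
\]
follow by inspecting the first two orders of the stationary-phase expansion, where the leading term is pointwise multiplication and the subleading antisymmetric part reproduces the K\"ahler Poisson bracket. Associativity of $\star_{BT}$ is inherited from associativity of operator composition: both $(T_{f_1}T_{f_2})T_{f_3}$ and $T_{f_1}(T_{f_2}T_{f_3})$ admit asymptotic expansions in Toeplitz operators, and since two Toeplitz symbols with the same asymptotic expansion must agree order by order (by injectivity of $f\mapsto T^{(m)}_f$ modulo $O(1/m)$), the coefficients associate. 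Separation of variables of Wick type follows since $\Pi^{(m)}$ acts as the identity on holomorphic sections, so $T^{(m)}_h T^{(m)}_g=T^{(m)}_{hg}$ whenever $g$ is holomorphic (and the antiholomorphic case is dual).

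Finally, uniqueness of a star product satisfying \eqref{eqbt2} follows from the injectivity principle: if two star products $\star,\star'$ both satisfy $T^{(m)}_{f_1}T^{(m)}_{f_2}\sim\sum m^{-j}T^{(m)}_{C_j(f_1,f_2)}$, then $T^{(m)}_{C_j(f_1,f_2)-C'_j(f_1,f_2)}=O(m^{-1})$ for each $j$, forcing $C_j=C'_j$. To close the loop with \eqref{eqbt}, I would observe that the covariant (Berezin) symbol of $T^{(m)}_f$ is $I^{(m)}f$, so that passing to covariant symbols in \eqref{eqbt2} yields $I^{(m)}f_1\star_B I^{(m)}f_2\sim I^{(m)}(f_1\star_{BT}f_2)$, which is precisely \eqref{eqbt}; thus the unique star product characterized by \eqref{eqbt2} coincides with the one defined by the Berezin transform intertwining relation. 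The hardest technical step throughout is the first one---justifying a genuine \emph{Toeplitz-operator-valued} asymptotic expansion with controlled remainders---everything else is algebraic bookkeeping on top of that analytic foundation.
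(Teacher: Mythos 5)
The paper does not prove this theorem: it is Schlichenmaier's result, quoted verbatim from \cite{Sch} and used as an input (via Karabegov--Schlichenmaier's identification theorem) to the results the paper actually proves. So there is no in-paper argument to compare yours against; I can only assess your outline against the known proofs in the literature.

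Your overall architecture is the standard one, and the easy parts are handled correctly: uniqueness from the norm asymptotics $\|T^{(m)}_f\|\to\|f\|_\infty$ (so $T^{(m)}_g=O(m^{-1})$ forces $g=0$), associativity inherited from operator composition plus that same injectivity, the identities $C^{BT}_0(f_1,f_2)=f_1f_2$ and the Poisson-bracket condition from the first two orders, and the passage to covariant symbols to recover \eqref{eqbt} (the covariant symbol of $T^{(m)}_f$ is indeed $I^{(m)}f$). The genuine gap is exactly where you locate it, but your proposed fix does not close it. The existence of a full asymptotic expansion of $T^{(m)}_{f_1}T^{(m)}_{f_2}$ \emph{in Toeplitz operators}, with operator-norm control of the remainders at every order, is the entire content of the theorem; \cite{BMS} does not contain it. That paper proves only the zeroth- and first-order statements ($\|T^{(m)}_{f_1}T^{(m)}_{f_2}-T^{(m)}_{f_1f_2}\|=O(1/m)$, the commutator estimate, and the norm asymptotics), so invoking it as a black box for the resummation step is essentially circular. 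The actual proofs proceed either (a) as in \cite{Sch}, by realizing the $T^{(m)}_f$ as modes of a generalized Toeplitz structure on the unit circle bundle in the sense of Boutet de Monvel--Guillemin, where the symbol calculus gives the expansion by an inductive peeling-off of leading orders, or (b) as in \cite{KS} and \cite{MM}, by the route you sketch --- stationary phase on the composed Bergman kernels --- but there the hard work is the globally uniform off-diagonal decay $K_m(x,y)=O(m^{-\infty})$ away from the diagonal and the verification that each term of the expansion of the composed kernel is again, modulo $O(m^{-\infty})$, the kernel of a Toeplitz operator. Since \eqref{eqb1} as stated in the paper is a local expansion on a pseudoconvex domain, the compact K\"ahler case needs this global localization as a separate input. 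If you intend route (b), you should cite \cite{KS} or \cite{MM} for that step rather than \cite{BMS}; if you are content to quote the expansion, the theorem reduces to the bookkeeping you describe, which is fine but is then not an independent proof.
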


The Berezin transform was introduced by Berezin \cite{Ber2} for
symmetric domains in $\mathbb C^n$ and later extended by many authors
(see e.g. \cite{Eng0,Eng3}).
Karabegov and Schlichenmaier \cite{KS} proved the asymptotic
expansion of the Berezin transform for compact K\"ahler manifolds.

Berezin-Toeplitz quantization for compact K\"ahler manifolds was extensively studied in the
literature and had found many applications.
Karabegov \cite{Kar3} constructed an algebra of Toeplitz elements that is isomorphic to the algebra of Berezin-Toeplitz quantization.
Ma and Marinescu
\cite{MM2} developed the theory of Toeplitz operators on
symplectic manifolds twisted with a vector bundle.
Zelditch \cite{Zel2} studied quantizations of symplectic maps on compact K\"ahler manifolds
and uncovered a connection between Berezin-Toeplitz quantization and quantum chaos.
Andersen \cite{And} proved asymptotic faithfulness of the
mapping class groups action on Verlinde bundles by using Berezin-Toeplitz technique (cf. also \cite{Sch2}).

\vskip 30pt
\section{Differential operators encoded by graphs} \label{graph}

Throughout this paper, a {\it digraph}, or simply a graph, $G=(V,E)$ is defined to be a directed multi-graph, i.e. it has finite number of vertices and edges with multi-edges and
loops allowed.

Recall the definition of stable and semistable graphs in \cite{Xu, Xu2}. These graphs were used to represent Weyl
invariants, which encode the coefficients of the asymptotic
expansion.
\begin{definition}
We call a vertex $v$ of a digraph $G$ \emph{semistable} if we have
$$\deg^-(v)\geq1,\ \deg^+(v)\geq1,\ \deg^-(v)+\deg^+(v)\geq3.$$
$v$ is call \emph{stable} if $\deg^-(v)\geq2,\ \deg^+(v)\geq2$.
 \end{definition}

\begin{definition}\label{dfdot}
An \emph{$m$-pointed graph} $\Gamma=(V\cup\{f_1,\dots,f_m\},E)$ is
defined to be a digraph with $m$ distinguished vertex labeled by $f_1,\dots,f_m$. An automorphism of $\Gamma$ fixes
each of the $m$ distinguished vertices and its automorphism group is denoted by ${\rm Aut}(\Gamma)$. $\Gamma$ is called semistable (stable)
if each ordinary vertex $v\in V$ is semistable (stable).
We denote by
\begin{verse}
$V(\Gamma)=V\cup\{f_1,\dots,f_m\}$;

$\Gamma_-$ the subgraph
of $\Gamma$ obtained by removing distinguished vertices;

$w(\Gamma)=|E|-|V|$ the weight of $\Gamma$;

$\overline{\mathcal G}_m$ the set of all $m$-pointed graphs;

$\mathcal G_m^{scon}$ the set of all strongly connected $m$-pointed graphs;

$\mathcal G_m^{ss}$ the set of all strongly connected semistable $m$-pointed graphs;

$\mathcal G_m$ the set of all strongly connected stable $m$-pointed graphs.
\end{verse}
For an $m$-pointed graph $G$, we denote by $\dot G$ the one-pointed graph obtained by
merging the $m$ distinguished points of $G$ into one point. $G$ is called strongly connected if $\dot G$ is strongly connected.
It is obvious that $\mathcal G_m\subset\mathcal G_m^{ss}\subset\mathcal G_m^{scon}\subset\overline{\mathcal G}_m$.

We also denote by
\begin{verse}
$\overline{\mathcal G}_m(k)$ the set of all $m$-pointed graphs with weight $k$;

$\overline{\mathcal G}=\cup_{m\geq0}\overline{\mathcal G}_m$ the set of all pointed graphs.

\qquad $\vdots$
\end{verse}
The same notation applies to other sets of graphs listed above.
\end{definition}

The one-pointed graph in Figure \ref{figweyl} represents an Weyl invariant $g_{i \bar i k \bar l p} g_{j \bar j l \bar k
\bar q} f_{q\bar p}$ in partial derivatives of metrics and functions. Note that $(i,\bar i),\,(j, \bar j)$ etc. are paired indices to be
contracted. The weight of a
directed edge is the number of multi-edges. The number attached to a
vertex denotes the number of its self-loops. A vertex without loops
will be denoted by a small hollow circle $\circ$. The distinguished
vertex is denoted by a solid circle $\bullet$.
\begin{figure}[h]
$\xymatrix@C=4mm@R=7mm{
                & \bullet  \ar[dr]^{1}             \\
         *+[o][F-]{1} \ar[ur]^{1} \ar@/^0.4pc/[rr]^{1} & &  *+[o][F-]{1}  \ar@/^0.4pc/[ll]^{1}
         }
$ \caption{A strongly connected one-pointed graph} \label{figweyl}
\end{figure}
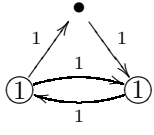

The following theorem on graph theoretic formulae for Bergman kernel and Berezin transform will be used in the sequel.
\begin{equation}
B(x)=\sum_{k\geq0}\nu^k B_k(x),\qquad I f(x)=\sum_{k\geq0}\nu^k Q_k f(x).
\end{equation}
\begin{theorem}[\cite{Xu}] On a K\"ahler manifold $M$, the Bergman kernel has the expansion
\begin{equation}\label{eqb2}
B(x)=\exp\left(\sum_{G=(V,E)\in\mathcal G_0^{ss}}\nu^{|E|-|V|}\; \frac{-\det(A(G)-I)}{|{\rm Aut}(G)|}\,G\right),
\end{equation}
where $G$ runs over all strongly connected semistable graphs.
\end{theorem}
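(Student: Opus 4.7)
The plan is to obtain the formula by combining a stationary-phase expansion of the reproducing kernel with a reorganisation of the resulting Feynman sum by strong connectivity.

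Working in a normal coordinate system centred at $x$ satisfying \eqref{eqnormal}, expand the K\"ahler potential as $\Phi(y,\bar y)=|y|^2+R(y,\bar y)$, where $R$ is a formal series whose monomials have bidegree $(p,q)$ with $p,q\geq 1$ and $p+q\geq 3$ (the last inequality follows because all $g_{i\bar j k_1\dots k_r}$ and $g_{i\bar j \bar\ell_1\dots \bar\ell_r}$ vanish at $x$ by \eqref{eqnormal}). Combining the reproducing property of $K_m$ with Laplace's method, each coefficient $B_k(x)$ appearing in \eqref{eqb1} is expressed as a Gaussian moment of a polynomial in the Taylor coefficients of $R$, with propagator $g^{i\bar j}(x)=\delta_{ij}$.

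Applying Wick's theorem translates these Gaussian moments into a formal sum over digraphs: each interaction vertex encodes a Taylor coefficient of $R$ with its holomorphic (resp.\ antiholomorphic) derivatives represented by out-going (resp.\ in-coming) legs, and each directed edge corresponds to one contraction. By the bidegree restriction on $R$, every vertex is automatically semistable. This yields a preliminary expansion of $B(x)$ as a sum over all semistable digraphs (not necessarily connected) with appropriate multiplicities coming from Wick's combinatorics and the automorphism factor $1/|{\rm Aut}|$.

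The decisive step is to pass to $\log B(x)$ and identify the result with the stated sum over $\mathcal G_0^{ss}$ weighted by $-\det(A(G)-I)/|{\rm Aut}(G)|$. The linked-cluster theorem immediately replaces the sum over ``all graphs'' by one over \emph{weakly} connected graphs; the non-trivial task is to contract this further to \emph{strongly} connected graphs. Every weakly connected digraph has a unique condensation into a directed acyclic graph of its strongly connected components. A M\"obius-type inversion on the poset of such condensations, combined with the signed cycle-cover expansion $\det(A-I)=\sum_{\sigma}{\rm sign}(\sigma)\prod_i(A-I)_{i\sigma(i)}$, should identify the net contribution attached to each strongly connected atom as exactly $-\det(A(G)-I)/|{\rm Aut}(G)|$. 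This combinatorial identity is the principal obstacle; once it is in hand, the theorem follows by matching the graph partition functions against $B_k(x)$ computed directly in normal coordinates, the restriction to $\mathcal G_0^{ss}$ being automatic from the vertex condition on $R$.
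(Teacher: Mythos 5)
First, a point of reference: the paper does not prove this statement at all --- \eqref{eqb2} is quoted from \cite{Xu} and used as an input to the later arguments, so there is no in-paper proof to compare yours against; what follows assesses your sketch as a from-scratch derivation. Your opening steps (normal coordinates, $\Phi=|y|^2+R$ with monomials of bidegree $(p,q)$, $p,q\geq 1$, $p+q\geq 3$ by \eqref{eqnormal}, hence automatically semistable vertices; Wick expansion with propagator $\delta_{ij}$) are the standard and essentially correct starting point, in the spirit of Engli\v{s} and Reshetikhin--Takhtajan. But already here there is an elision: the reproducing identity $K_m(x,x)=\int |K_m(x,y)|^2 e^{-m\Phi(y)}\,d\mu(y)$ does not express $B_k(x)$ directly as a Gaussian moment of Taylor coefficients of $R$, because the integrand contains the unknown off-diagonal amplitudes $B_j(x,y)$ for $j\leq k$ as well as the volume density $\det g(y)$ (whose expansion contributes its own vertices and cycles). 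One obtains a recursion, not a closed Feynman sum, and unwinding that recursion is itself a substantial part of the work; your sketch treats it as routine.

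The more serious gap is the one you flag yourself. The linked-cluster theorem only reduces the sum to \emph{weakly} connected graphs, each weighted by the product of vertex weights over $|{\rm Aut}|$; the whole content of the theorem is that after all cancellations the logarithm is supported on \emph{strongly} connected semistable graphs with the specific weight $-\det(A(G)-I)/|{\rm Aut}(G)|$. Asserting that a ``M\"obius-type inversion on the poset of condensations, combined with the signed cycle-cover expansion of $\det(A-I)$, should identify'' this weight is a restatement of the desired conclusion rather than an argument: you do not exhibit the partial order, the relevant M\"obius function, or the cancellation mechanism that makes every weakly-but-not-strongly connected graph drop out, and the precise sign and normalization of $\det(A(G)-I)$ --- which in this paper is evaluated via spanning generalized linear subgraphs, Theorem \ref{ct} --- is exactly what such an analysis must produce. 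That combinatorial identification is the entire difficulty of the result proved in \cite{Xu}, and it is missing here. As it stands the proposal is a plausible outline whose decisive step is unproven.
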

In particular, there are two strongly connected semistable graphs of weight $1$, so we have
\begin{equation}
B(x)=\exp\left(1+\nu\left(-\frac{1}{2}\big[\xymatrix{*+[o][F-]{2}}\big]+\frac12\bigg[\xymatrix{ \circ  \ar@/^/[r]^2 & \circ \ar@/^/[l]^1
         }\bigg]\right)+O(\nu^2)\right).
\end{equation}

\begin{theorem}[\cite{Xu2}] On a K\"ahler manifold $M$, the Berezin transform has the expansion
\begin{equation}\label{eqb3}
I (f)=\sum_{\Gamma=(V\cup\{f\},E)\in\mathcal G_1^{scon}}\nu^{|E|-|V|}\;\frac{\det(A(\Gamma_-)-I)}{|{\rm Aut}(\Gamma)|}\,\Gamma.
\end{equation}
\end{theorem}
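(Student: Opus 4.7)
The plan is to derive the graph expansion \eqref{eqb3} directly from a stationary-phase analysis of the defining integral \eqref{eqber}, using the Bergman kernel formula \eqref{eqb2} as input. Working in normal coordinates centered at a point $x\in M$ and rescaling $y=x+u/\sqrt{m}$, I factor $K_m(x,y)K_m(y,x)e^{-m\Phi(y)}$ as $e^{m\Phi(x)}\cdot e^{-mD(x,y)}\cdot K_m(x,y)K_m(y,x)e^{-m[\Phi(x,y)+\Phi(y,x)]}$, where $D(x,y)=\Phi(x)+\Phi(y)-\Phi(x,y)-\Phi(y,x)$ is the Calabi diastasis. In normal coordinates $D(x,y)=|y-x|^2+O(|y-x|^3)$, so the Gaussian $e^{-|u|^2}$ controls the integral and every Wick contraction $u_i\bar u_j$ produces a factor $\delta_{ij}/m$, which is to be interpreted as a directed edge from the $\partial_{\bar z}$-side to the $\partial_z$-side.

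Taylor-expanding $f(y)$, the cubic-and-higher part of $-mD(x,y)$, and the Bergman amplitudes $B_k$, then applying Wick's theorem, yields a formal sum over diagrams whose vertices carry partial derivatives of the K\"ahler potential (equivalently of the metric) and whose edges are contractions. Division by $K_m(x,x)$ triggers the standard connected/disconnected cancellation from QFT: both numerator and denominator contain the same exponentiated vacuum contribution coming from \eqref{eqb2}, so the ratio retains only those diagrams in which every ordinary vertex lies on a directed path both from and to the distinguished vertex $f$. This singles out exactly the strongly connected one-pointed graphs $\Gamma\in\mathcal G_1^{scon}$.

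The delicate step is identifying the weight as $\det(A(\Gamma_-)-I)/|\mathrm{Aut}(\Gamma)|$. The automorphism factor is the usual symmetry correction for unlabelled Feynman graphs. The determinantal factor arises because, at each ordinary vertex, the multiple Wick contractions organize themselves into oriented cycles within $\Gamma_-$; applying the Leibniz formula to $\det(A(\Gamma_-)-I)$ expresses it as a signed sum over cycle covers, with fixed points of the underlying permutation (each contributing $-1$) absorbing single self-contractions and longer cycles accounting for multi-vertex loops generated by the expansion of $\exp(-mD)$. Equivalently, MacMahon's master theorem identifies the Gaussian generating function of Wick contractions on a digraph with $1/\det(I-A)$; tracking how the logarithm in \eqref{eqb2} inverts this reciprocal into the displayed signed cycle-cover polynomial gives the claimed weight.

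The main obstacle will be controlling signs and cycle counts precisely in that last step, in particular showing that the weight $-\det(A(G)-I)$ of \eqref{eqb2} for the Bergman kernel flips to $+\det(A(\Gamma_-)-I)$ after the quotient and after restricting to strongly connected graphs. A clean strategy is to first establish the identity at the origin of a normal chart on stable graphs $\mathcal G_1$ (where many bulk terms vanish), then to upgrade it to semistable graphs to obtain a globally valid identity on $\mathcal G_1^{ss}$, and finally to enlarge the summation to $\mathcal G_1^{scon}$ by checking that the extra unstable vertices contribute zero on the nose, consistently with the philosophy flagged in the remarks after Theorem \ref{main}.
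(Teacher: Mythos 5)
First, a point of reference: the paper does not prove this statement at all --- it is imported verbatim from \cite{Xu2} (and builds on \cite{Eng}), so there is no in-paper argument to compare against; your proposal must stand on its own. As a plan it follows the right general route (Engli\v{s}-style Laplace asymptotics of \eqref{eqber} with phase the Calabi diastasis, Wick contractions giving edges $g^{i\bar j}$, normalization by $K_m(x,x)$ killing vacuum diagrams), but two of its central claims are not established and one of them is asserted for the wrong reason. The connected/disconnected cancellation coming from dividing by $K_m(x,x)$ (equivalently, from $I_m1=1$) only removes components not containing the marked vertex $f$; it yields \emph{connected} diagrams, not \emph{strongly connected} ones. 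Strong connectivity is a genuinely stronger property, and the vanishing of the coefficients of connected, semistable, but not strongly connected one-pointed graphs is a separate combinatorial theorem (it is one of the main points of \cite{Xu,Xu2}); note that $\det(A(\Gamma_-)-I)$ does not itself vanish on such graphs, so nothing in your setup forces them out. Likewise, the identification of the weight as $\det(A(\Gamma_-)-I)$ via cycle covers and MacMahon's master theorem is precisely the hard part, and you explicitly defer it (``the main obstacle will be controlling signs and cycle counts''). A proof that postpones its two decisive steps is a program, not a proof.

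Your closing strategy for passing from $\mathcal G_1$ to $\mathcal G_1^{ss}$ to $\mathcal G_1^{scon}$ also misreads the structure of the formula. The strongly connected graphs that are not semistable (those containing $(1,1)$-vertices, i.e.\ subdivided edges) do \emph{not} ``contribute zero on the nose'': they carry the nonzero coefficients $\det(A(\Gamma_-)-I)/|{\rm Aut}(\Gamma)|$ dictated by Lemma \ref{matrix} and account for derivatives falling on $g^{i\bar j}$ via \eqref{eqginv}; they vanish only at the center of a normal coordinate chart. So the correct logic is the reverse of what you propose: one proves the identity for semistable graphs globally (or for stable graphs at the center of normal coordinates, which already determines the universal coefficients), and then the extension to all of $\mathcal G_1^{scon}$ is a bookkeeping convention justified by Lemma \ref{matrix} and Remark \ref{rm3}, not a vanishing statement. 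To complete your argument you would need (i) a proof that each ordinary vertex produced by the Taylor coefficients of the diastasis and of $\log\bigl(B(x,y)B(y,x)/B(x,x)\bigr)$ is semistable, (ii) the vanishing theorem for non--strongly-connected graphs, and (iii) the closed-form evaluation of the resulting cycle-cover sum as $\det(A(\Gamma_-)-I)$ consistent with the $-\det(A(G)-I)$ weights in \eqref{eqb2}. None of these is supplied.
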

In particular, coefficients up to $\nu^3$ in \eqref{eqb3} (keeping only stable graphs) were obtained in \cite{Xu2}.
\begin{multline}\label{eqb9}
I(f)=f+\nu\Big[\xymatrix{\bullet
              \ar@(ur,dr)[]^{1}}\Big]
+\frac{\nu^2}2\Big[\xymatrix{\bullet
           \ar@(ur,dr)[]^{2}}\Big]
+\nu^3\Bigg(\frac{1}{6}\Big[\xymatrix{\bullet \ar@(ur,dr)[]^{3}}\Big]-
       \frac{1}{4}\bigg[\xymatrix{ \circ \ar@/^/[r]^2 & \bullet \ar@/^/[l]^2
         }\bigg]
\\-\frac{1}{2}\Bigg[\begin{minipage}{0.6in}$\xymatrix@C=2mm@R=5mm{
                & \bullet \ar[dr]^{1}             \\
         \circ \ar[ur]^{1} \ar@/^0.3pc/[rr]^{1} & &  \circ  \ar@/^0.3pc/[ll]^{2}
         }$
         \end{minipage}\Bigg]-\Bigg[\begin{minipage}{0.65in}
             $\xymatrix@C=2mm@R=5mm{
                & \bullet \ar@/^-0.3pc/@<0.2ex>[dl]^{1}             \\
         \circ \ar@/^0.3pc/@<0.7ex>[ur]^{1} \ar@/^-0.3pc/@<0.2ex>[rr]^{1} & & *+[o][F-]{1}
         \ar@/^0.3pc/@<0.7ex>[ll]^{1}
         }$
         \end{minipage}\Bigg]+\frac12\bigg[\xymatrix{
        *+[o][F-]{2} \ar@/^/[r]^1
         &
        \bullet \ar@/^/[l]^1} \bigg]\Bigg)+O(\nu^4).
\end{multline}

Note that in \eqref{eqb3} we sum over all strongly connected one-point graph, no matter the graph is semistable or not. The reason is that
it will simplify the computation of the inverse Berezin transform as explained in Remark \ref{rm3}.

Fix a normal coordinate system around $x$ on a K\"ahler manifold $M$, then \eqref{eqb2} and \eqref{eqb3} still hold at $x$
if we only sum over those stable graphs.

We now define three special subsets of $\mathcal G_1$:
\begin{align}
\mathcal G_B=\{\Gamma\in\mathcal G_1\mid & \text{ 1 is not an
eigenvalue of } A(\Gamma_-)\},\\ \label{graphBT}
\mathcal G_{BT}=\{\Gamma\in\mathcal G_1\mid & \text{ each SCC of } \Gamma_- \text{ is either a single vertex }
\\& \text{ or a linear digraph}\}, \nonumber
\\ \label{graphS}
\mathcal G_S=\{\Gamma\in\mathcal G_1\mid & \text{ each SCC of } \Gamma_- \text{ is a single vertex without loops} \},
\end{align}
where SCC is an abbreviation for \emph{strongly connected component}. More explicitly, $\Gamma\in\mathcal G_{BT}$
if each SCC of $\Gamma_-$ belongs to
\begin{equation}\label{lineargraph}
\circ\quad \xymatrix{*+[o][F-]{1}}\quad \xymatrix{
        \circ \ar@/^/[r]^1
         &
        \circ \ar@/^/[l]^1}\quad
        \begin{minipage}{0.6in}$\xymatrix@C=3mm@R=6mm{
                & \circ \ar[dr]^{1}             \\
        \circ  \ar[ur]^{1}  & &  \circ \ar[ll]_{1}
         }$\end{minipage}\quad \cdots
\end{equation}
And $\Gamma\in\mathcal G_S$ can also be characterized by saying that $\Gamma_-$ is a directed acyclic graph (DAG).

\begin{remark}
For later use, we also need to relax the condition ``stable'' to semistable or just strongly connected in the above definition.
The corresponding sets are denoted by $\mathcal G_B^{ss},\mathcal G_{BT}^{ss},\mathcal G_S^{ss}$ and $\mathcal G_B^{scon},\mathcal G_{BT}^{scon},\mathcal G_S^{scon}$. For example,
\begin{align}
\mathcal G_{BT}^{scon}=\{\Gamma\in\mathcal G_1^{scon}\mid & \text{ each SCC of } \Gamma_- \text{ is either a single vertex }
\\& \text{ or a linear digraph}\}, \nonumber
\\
\mathcal G_{BT}^{ss}=\{\Gamma\in\mathcal G_{BT}^{scon}\mid & \ \Gamma \text{ is semistable}\}.\label{graphBTss}
\end{align}
\end{remark}

For any $k\geq0$, the sets $\mathcal G_B(k),\mathcal G_{BT}(k),\mathcal G_S(k)$ are respectively in one-to-one correspondence with
weight $k$ terms of Berezin, Berezin-Toeplitz and Karabegov-Bordemann-Waldmann star products.

We have computed the cardinalities of these sets when $k\leq6$ in
Table \ref{tb1}.

\begin{table}[h]
\caption{Numbers of strongly connected stable one-pointed graphs} \label{tb1}
\begin{tabular}{|c||c|c|c|c|c|c|c|}
\hline
    $k$                                            &$0$&$1$&$2$&$3$ &$4$  &$5$   &$6$
\\\hline       $|\mathcal G_1(k)|$      &$1$&$1$&$2$&$9$ &$61$ &$538$ &$5906$
\\\hline       $|\mathcal G_{B}(k)|$                  &$1$&$1$&$1$&$5$ &$36$ &$331$ &$3704$
\\\hline       $|\mathcal G_{BT}(k)|$                  &$1$&$1$&$2$&$6$ &$24$ &$112$ &$620$
\\\hline       $|\mathcal G_{S}(k)|$                  &$1$&$1$&$1$&$2$ &$5$ &$15$ &$54$
\\\hline
\end{tabular}
\end{table}

\begin{definition} \label{partition}
Let $\Gamma\in\overline{\mathcal G}_1$ be a one-pointed graph, the {\it partition function}
$D_{\Gamma}(f_1,f_2)$ is defined to be a Weyl invariant generated
from $\Gamma$ by replacing the vertex $f$ in with two vertices $f_1$
and $f_2$, such that all outward edges of $f$ are connected to $f_1$
and all inward edges of $f$ are connected to $f_2$.

Let $H\in\overline{\mathcal G}$ be an arbitrary pointed graph, we
define $D_{\Gamma}(H)$ to be the Weyl invariant generated by
replacing $f$ in $\Gamma$ with $H$.
\end{definition}

For example, if $\Gamma$ is the graph in Figure \ref{figweyl}, then
\begin{gather}
D_{\Gamma}(f_1,f_2)=g_{i \bar i k \bar l p} g_{j \bar j l \bar k
\bar q} \partial_{q} f_1 \partial_{\bar p} f_2,\\
D_{\Gamma}(H)=g_{i \bar i k \bar l p} g_{j \bar j l \bar k \bar q}
\partial_{q\bar p} H.\label{eqb4}
\end{gather}

Similarly for two pointed graphs $H_1,H_2\in\overline{\mathcal G}$, we
define $D_{\Gamma}(H_1,H_2)$ to be the Weyl invariant generated
by replacing $f_1,f_2$ with $H_1,H_2$ in $D_{\Gamma}(f_1,f_2)$. We may linearly extend $D_\Gamma(\cdot)$ to be defined on linear combination of graphs.
We also use the notation $D_{\Gamma}^{op}(H_1,H_2)=D_{\Gamma}(H_2,H_1)$.

When expanding $\partial_{q\bar p} H$ in \eqref{eqb4}, we may need to take derivatives of $g^{i\bar j}$,
\begin{equation}\label{eqginv}
\partial_{\alpha}g^{i\bar j}=-g^{p\bar j}g^{i\bar q}g_{p\bar
q \alpha},
\end{equation}
where new vertex appears as shown in Figure \ref{figedge}.
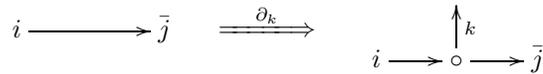
\begin{figure}[h]
\begin{tabular}{c}$\xymatrix@C=7mm@R=5mm{  i \ar[rr] & & \bar j}$ \end{tabular}\quad
 $\xLongrightarrow{\quad \partial_k\quad }$ \quad
 \begin{tabular}{c}
 $\xymatrix@C=7mm@R=5mm{  & &\\  i \ar[r] &\circ \ar[r] \ar[u]_k & \bar j}$ \end{tabular}
\caption{Illustration of $\partial_{k}g^{i\bar j}=-g^{p\bar j}g^{i\bar q}g_{p\bar
q k}$}
 \label{figedge}
\end{figure}

\begin{remark}\label{rm3} In the recursive computation of the inverse Berezin tranform $I^{-1}$ from $I^{-1}I=id$ or $I\cdot I^{-1}=id$,
we need to handle terms like \eqref{eqb4}. The possible action on the edge (cf. Figure \ref{figedge}) made the computation
much more complicated. However, thanks to the special structure of $I$ and $I^{-1}$, we may extend their summations over all
strongly connected graphs (cf. \eqref{eqb3}, \eqref{eqb5}) such that when taking derivatives on a graph, the derivatives only go to
vertices. This follows from Lemma \ref{matrix} (note the minus sign in \eqref{eqginv}) and the fact that any strongly connected graph can be obtained by adding finite
number of vertices to edges of a semistable graph. Adding a vertex to an edge may change the automorphism group of a graph, but this will
be compensated when taking derivatives by Leibniz rule. This assertion can be made rigorous by following the argument of Lemma \ref{graph2}.
For example, the automorphism group of the left-hand side (LHS) graph in Figure \ref{figedge2}
has order $2$. It becomes rigid (i.e. have no nontrivial automorphism) after adding a vertex on an edge. But there are exactly two ways
of adding a vertex to the LHS graph to turn it into the RHS graph.
\begin{figure}[h]
\begin{tabular}{c}$\xymatrix@C=7mm@R=5mm{  \circ \ar@/^0.6pc/[rr] & & \circ \ar@/^0.6pc/[ll]}$ \end{tabular}
\qquad
\begin{tabular}{c}
 $\xymatrix@C=7mm@R=0mm{
 &\circ \ar@/^0.2pc/[dr]&
 \\
 \circ \ar@/^0.2pc/[ur] & & \circ \ar@/^/[ll]
 \\
 & & }$
\end{tabular}
\caption{Adding a vertex to an edge}
 \label{figedge2}
\end{figure}
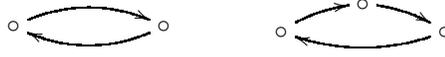
In fact, throughout the paper, when dealing with $D_\Gamma(H)$ and $D_\Gamma(H_1,H_2)$,
it is sufficient to stipulate that no derivatives will be taken on edges. Thus we will be able to use Lemma \ref{graph2}.
\end{remark}

For convenience, we introduce the following notation.
\begin{definition}\label{partition2} Let $\Gamma\in\overline{\mathcal G}_1$ and $H,H_1,H_2\in\overline{\mathcal G}$.
Define $\Gamma(H)$ and $\Gamma(H_1,H_2)$ respectively to be the summation of those graph terms in the expansion of $D_\Gamma(H)$ and $D_\Gamma(H_1,H_2)$
with
derivatives only acting on the vertices of $H, H_1, H_2$. Namely we discard all graphs in $D_\Gamma(H)$ and $D_\Gamma(H_1,H_2)$ if some vertex of it
was created through taking derivative on an edge. We also use the notation $\Gamma^{op}(H_1,H_2)=\Gamma(H_2,H_1)$.
\end{definition}

\begin{lemma} \label{matrix}
Let $\Gamma\in\overline{\mathcal G}_1$ be a one-pointed graph and $\Gamma'$ be the graph obtained by adding a vertex
to an edge $e=(v_1,v_2)$ of $\Gamma$. Then $\det(A(\Gamma'_-)-I)=-\det(A(\Gamma_-)-I)$.
\end{lemma}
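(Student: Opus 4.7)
My plan is to compare the two adjacency matrices directly and apply the Schur complement formula to the block that isolates the newly inserted vertex. Write $A := A(\Gamma_-)$, let $n$ be the number of vertices of $\Gamma_-$, let $e=(v_1,v_2)$ be the subdivided edge, and let $w$ be the inserted vertex.

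The main case is when $v_1, v_2 \neq f$, so that $e$ genuinely occurs in $\Gamma_-$. Placing $w$ as the last coordinate, one reads off from the subdivision that
\[
A(\Gamma'_-) \;=\; \begin{pmatrix} A - e_{v_1} e_{v_2}^{\top} & e_{v_1} \\ e_{v_2}^{\top} & 0 \end{pmatrix},
\]
since removing one copy of $e$ subtracts $1$ from the $(v_1,v_2)$-entry of $A$, while the new arcs $(v_1,w)$ and $(w,v_2)$ produce the two off-diagonal blocks. Subtracting $I_{n+1}$ and expanding by Schur complement about the lower-right scalar $-1$, the rank-one correction from the off-diagonal blocks is $-e_{v_1}(-1)^{-1}e_{v_2}^{\top}=e_{v_1}e_{v_2}^{\top}$, which exactly cancels the rank-one perturbation in the upper-left block. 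What remains is $(-1)\cdot\det(A-I_n)$, giving the desired identity. This single computation also covers the loop case $v_1=v_2$ and the multi-edge case, where the entry $A_{v_1 v_2}$ is simply an integer that decreases by one.

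The remaining cases are when $e$ is incident to $f$. Then $e$ is not present in $\Gamma_-$ at all, and $w$ appears in $\Gamma'_-$ together with at most one incident arc (the one whose other endpoint is not $f$); when $e$ is a loop at $f$, no new arc survives. In each sub-case $A(\Gamma'_-)-I_{n+1}$ is block triangular with $-1$ in the new diagonal entry, so a one-line cofactor expansion again gives the factor $-1$.

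I do not anticipate any real obstacle: once the block decomposition is written down, the Schur complement collapses everything in a single step. The only item that deserves attention is the case distinction, to ensure that $w$ carries exactly the arcs dictated by the subdivision and that multi-edges and loops are handled correctly by treating $A$ as an integer matrix rather than a $\{0,1\}$-matrix.
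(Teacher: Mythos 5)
Your proof is correct and follows essentially the same route as the paper: both isolate the new vertex in the bordered matrix $A(\Gamma'_-)-I$ and perform one elimination step (your Schur complement about the $-1$ entry is exactly the paper's column operation), observing that the correction $+e_{v_1}e_{v_2}^{\top}$ restores the removed edge so that $A-I$ reappears with an extra factor of $-1$. Your explicit treatment of loops, multi-edges, and edges incident to the distinguished vertex fills in details the paper dismisses as obvious, but introduces no new idea.
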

\begin{proof}
First we assume that both $v_1,v_2$ are not the distinguished vertex of $\Gamma$. Let $v_1,\dots,v_n$ be the vertices of $\Gamma_-$ and
$a_{ij}$ be the number of directed edges from $v_i$ to $v_j$. Then
$$A(\Gamma'_-)-I=\begin{bmatrix}\begin{smallmatrix}
 -1 & 0 & 1 & \dots & 0\\
 1 & a_{11}-1 & a_{12}-1 &  & \\
 0 & a_{21} & a_{22}-1 & &   \\
 \vdots   &   &  & \ddots  & \vdots\\
 0&   & \dots &  & a_{nn}-1
\end{smallmatrix}\end{bmatrix}=\begin{bmatrix}\begin{smallmatrix}
 -1 & 0 & 0 & \dots & 0\\
 1 & a_{11}-1 & a_{12} &  & \\
 0 & a_{21} & a_{22}-1 & &   \\
 \vdots   &   &  & \ddots  & \vdots\\
 0&   & \dots &  & a_{nn}-1
\end{smallmatrix}\end{bmatrix},$$
which implies that $\det(A(\Gamma'_-)-I)=-\det(A(\Gamma_-)-I)$.

If at least one of $v_1,v_2$ is the distinguished vertex, the lemma is obvious.
\end{proof}

Let $G,H\in\overline{\mathcal G}_m$ be $m$-pointed graphs and $H$ be a subgraph of $G$ requiring that the
$m$ distinguished points of $H$ are exactly the $m$ distinguished points of $G$. We define $G/H$ to be a one-pointed graph
obtained from $G$ by contracting $H$ to a point, which will be the distinguished vertex
of $G/H$. The ordinary vertices of $G/H$ are just the vertices not in $H$ and edges of $G/H$ are just edges
not in $H$.

Let $\Gamma\in\overline{\mathcal G}_1$ and $G,H\in\overline{\mathcal G}_m$. We define a natural number
\begin{equation}\label{alpha}
\alpha(\Gamma,H;G):=\#\{\text{subgraphs } H'\text{ of } G\mid H'\cong H, G/H'\cong\Gamma\}
\end{equation}

\begin{lemma} \label{graph2}
Let $\Gamma\in\overline{\mathcal G}_1$ and $H\in\overline{\mathcal G}$.
Then
\begin{equation}
\frac{1}{|{\rm Aut}(\Gamma)| |{\rm
Aut}(H)|}\Gamma(H)=\sum_{G}\frac{\alpha(\Gamma,H;G)}{|{\rm Aut}(G)|}G,
\end{equation}
where $G$ in the summation runs over isomorphism classes of graphs appearing in the
expansion of $\Gamma(H)$ by Leibniz rule.
\end{lemma}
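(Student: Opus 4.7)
The plan is to prove the identity by making the Leibniz expansion on the left-hand side combinatorially explicit, then matching the resulting sum against the right-hand side via a double-count over isomorphisms of labeled graphs.

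First I make the Leibniz expansion explicit. Let $E_f$ denote the set of edge-ends of $\Gamma$ incident to $f$, so a self-loop at $f$ contributes two ends while an ordinary edge at $f$ contributes one. For each map $\phi\colon E_f\to V(H)$ I construct a labeled pointed graph $G_\phi$ with vertex set $(V(\Gamma)\setminus\{f\})\sqcup V(H)$: retain every edge of $\Gamma$ not incident to $f$, insert $H$ together with its edges, and for each end $\varepsilon\in E_f$ lying on an edge $e$ of $\Gamma$, reattach that end of $e$ at the vertex $\phi(\varepsilon)\in V(H)$ (preserving orientation). By Definition~\ref{partition2} the Leibniz expansion, with derivatives restricted to vertices of $H$, yields
\[
\Gamma(H) \;=\; \sum_{\phi\colon E_f\to V(H)} G_\phi.
\]
Distinct $\phi$'s produce distinct labeled graphs, so grouping by isomorphism class gives $\Gamma(H)=\sum_{[G]} N([G])\cdot G$, where $N([G])$ is the number of $\phi$'s with $G_\phi$ in the class $[G]$. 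The desired identity therefore reduces to
\[
N([G])\cdot|\mathrm{Aut}(G)| \;=\; \alpha(\Gamma,H;G)\cdot|\mathrm{Aut}(\Gamma)|\cdot|\mathrm{Aut}(H)|.
\]

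The second step is a double-count. Fix a representative $G^*$ of the class $[G]$ and let $\mathcal T$ be the set of pairs $(\phi,\psi)$ where $\phi$ is as above and $\psi\colon G_\phi\to G^*$ is an isomorphism of pointed graphs. Counting by $\phi$ first, $|\mathcal T|=N([G])\cdot|\mathrm{Aut}(G)|$, since for each $\phi$ with $G_\phi\cong G^*$ the set of isomorphisms to $G^*$ has cardinality $|\mathrm{Aut}(G^*)|=|\mathrm{Aut}(G)|$. For the other direction, observe that $G_\phi$ comes equipped with a canonical copy of $H$ as a subgraph and a canonical identification $G_\phi/H\cong\Gamma$. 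Hence an isomorphism $\psi$ restricts to an isomorphism $\sigma$ from that canonical copy onto a subgraph $H'\subseteq G^*$ with $H'\cong H$, and descends to an isomorphism $\tau\colon G^*/H'\to\Gamma$. This assigns to $(\phi,\psi)$ a triple $(H',\sigma,\tau)$ of exactly the type enumerated on the right.

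The heart of the proof is checking that the assignment $(\phi,\psi)\mapsto(H',\sigma,\tau)$ is a bijection. The inverse sends $(H',\sigma,\tau)$ to the unique pair $(\phi,\psi)$ determined by $\phi(\varepsilon)=\sigma\bigl(\text{endpoint in }H'\text{ of }\tau^{-1}(e)\bigr)$ for $\varepsilon\in E_f$ lying on $e\in E(\Gamma)$, with $\psi$ assembled from $\sigma^{-1}$ on the $H$-part and $\tau^{-1}$ on the complementary part; the construction of $G_\phi$ forces this to be well-defined and inverse to the forward map. Since for each admissible subgraph $H'\subseteq G^*$ there are $|\mathrm{Aut}(H)|$ choices of $\sigma$ and $|\mathrm{Aut}(\Gamma)|$ choices of $\tau$, we obtain $|\mathcal T|=\alpha(\Gamma,H;G)\cdot|\mathrm{Aut}(\Gamma)|\cdot|\mathrm{Aut}(H)|$, matching the other count and proving the lemma. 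The main obstacle I expect is purely bookkeeping: working with edge-ends rather than edges (so self-loops at $f$ are handled correctly) and tracking that orientations and distinguished vertices are preserved consistently across $\phi$, $\psi$, $\sigma$, and $\tau$; once this is set up, the bijection is essentially forced and the remainder is formal.
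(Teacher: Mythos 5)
Your proof is correct and is essentially the paper's own argument in unpacked form: the paper applies orbit--stabilizer to the natural action of ${\rm Aut}(\Gamma)\times{\rm Aut}(H)$ on the multiset of Leibniz terms, with isotropy group ${\rm Aut}(G)_{H'}$ of index $\alpha(\Gamma,H;G)$ in ${\rm Aut}(G)$, and that is precisely your double count of pairs $(\phi,\psi)$ reorganized. Apart from two harmless direction slips ($\sigma$ versus $\sigma^{-1}$ in the formula for $\phi$ and in assembling $\psi$ on the $H$-part), the details check out.
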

\begin{proof}
Note that the group ${\rm Aut}(\Gamma)\times {\rm Aut}(H)$
has a natural action on the multiset
of all graphs in the expansion of $\Gamma(H)$ by Leibniz rule. Then it is not
difficult to see that the set of orbits corresponds to isomorphism
classes of graphs and the isotropy group at a graph $G$ is ${\rm Aut}(G)_{H'}$,
the subgroup of ${\rm Aut}(G)$ that leaves invariant a subgraph $H'$ in
the set at the right-hand side of \eqref{alpha}. So we have
$$\Gamma(H)=\sum_G \frac{|{\rm Aut}(\Gamma)| |{\rm
Aut}(H)|}{|{\rm Aut}(G)_{H'}|}G=\sum_G \frac{\alpha(\Gamma,H;G)|{\rm Aut}(\Gamma)| |{\rm
Aut}(H)|}{|{\rm Aut}(G)|}G,$$
as claimed.
\end{proof}

\begin{definition}\label{defgraph}
Given a digraph $G$, we call a subgraph $L$ of $G$ a \emph{generalized linear subgraph} if
each connected component
of $L$ is either a vertex without loops or a linear graph (i.e. belongs to the graphs in \eqref{lineargraph}).
We denote by $\mathscr L(G)$ the set of all spanning generalized linear
subgraphs $L$ of $G$.
\end{definition}
The following theorem is a corollary of the so-called coefficient theorem in spectral graph theory.
\begin{theorem} \label{ct}
For a digraph $G$ with $n$ vertices, we have
\begin{equation}
\det(A(G)-I)=\sum_{L\in\mathscr L(G)} (-1)^{n+p(L)}=\sum_{L\in\mathscr L(G)}\prod_{C\in L} (-1)^{\ell(C)+1},
\end{equation}
where
$p(L)$ denotes the number of components of $L$ and $C$ runs over components of $L$ and
$\ell(C)$ denotes the length of $C$. We regard a single vertex as a 0-cycle and a loop as a 1-cycle.
\end{theorem}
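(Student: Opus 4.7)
The plan is to deduce the theorem from the classical coefficient theorem for the characteristic polynomial of a digraph adjacency matrix (Harary--Coates), which gives
\[
\det(\lambda I - A(G)) = \sum_{L'} (-1)^{c(L')}\,\lambda^{n - |V(L')|},
\]
where $L'$ ranges over (possibly non-spanning) vertex-disjoint unions of directed cycles in $G$, with loops regarded as $1$-cycles, and $c(L')$ is the number of cycles in $L'$. I would then specialize to $\lambda = 1$ and use $\det(A(G)-I) = (-1)^n\det(I - A(G))$ to obtain
\[
\det(A(G)-I) = \sum_{L'} (-1)^{n + c(L')}.
\]

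Next I would replace the sum over cyclic subdigraphs by a sum over spanning generalized linear subgraphs $L \in \mathscr{L}(G)$: each $L'$ extends uniquely to a spanning $L$ by adjoining an isolated-vertex component at every vertex of $G$ not covered by $L'$, and this assignment is a bijection. Under it $c(L')$ equals the number $c^*(L)$ of non-isolated components of $L$ (loops and longer cycles), so
\[
\det(A(G)-I) = \sum_{L \in \mathscr{L}(G)} (-1)^{n + c^*(L)}.
\]

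The final step is to rewrite $(-1)^{n + c^*(L)}$ in the per-component form $\prod_{C\in L}(-1)^{\ell(C)+1}$. Here an isolated vertex ($\ell=0$) contributes $-1$, a loop ($\ell=1$) contributes $+1$, and an $m$-cycle contributes $(-1)^{m+1}$; writing $p_0, p_1$ for the numbers of isolated vertices and loops in $L$ and $k$ for the number of longer cycles, one uses $n = p_0 + p_1 + \sum_{\ell\geq 2}\ell$ to compute $\prod_C(-1)^{\ell(C)+1} = (-1)^{n + p_1 + k} = (-1)^{n + c^*(L)}$, which matches the previous display.

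The argument is essentially a bookkeeping exercise, and the only obstacle is careful sign accounting --- reconciling the factor $(-1)^n$ arising from $I - A \leftrightarrow A - I$, the coefficient-theorem sign $(-1)^{c(L')}$, and the per-component signs. One small point worth flagging is that the intermediate expression $(-1)^{n+p(L)}$ in the statement matches $(-1)^{n + c^*(L)}$ precisely when $p(L)$ is read as the number of \emph{cycle} components of $L$ (loops and longer cycles), excluding isolated-vertex components; with this convention both stated equalities hold.
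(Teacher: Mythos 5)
Your derivation is correct and follows exactly the route the paper intends: the paper gives no proof of this theorem beyond the remark that it ``is a corollary of the so-called coefficient theorem in spectral graph theory,'' and your argument is precisely the specialization $\lambda=1$ of that theorem, the bijection with spanning generalized linear subgraphs via adjoining isolated vertices, and the sign bookkeeping. Your closing observation that the middle expression $(-1)^{n+p(L)}$ is only correct when $p(L)$ is read as the number of cycle components of $L$ (excluding isolated-vertex components) is a legitimate and worthwhile clarification of the statement.
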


\vskip 30pt
\section{Berezin-Toeplitz quantization} \label{BT}

The first few terms of Berezin-Toeplitz quantization have been computed in \cite{Eng3,KS,MM,Xu2}. Since Berezin-Toeplitz quantization
corresponds to the inverse Berezin transform, the following theorem
completely determines the structure of Berezin-Toeplitz quantization and implies \eqref{eqBT} in Theorem \ref{main}.
\begin{theorem} On a K\"ahler manifold, the inverse Berezin transform is given by
\begin{equation}\label{eqb5}
I^{-1}(f)=\sum_{\Gamma=(V\cup\{f\},E)\in\mathcal G_{BT}^{scon}}\nu^{|E|-|V|}\frac{(-1)^{|E|}}{|{\rm Aut}(\Gamma)|}
\,\Gamma.
\end{equation}
\end{theorem}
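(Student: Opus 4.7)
The plan is to verify $I\circ I^{-1}=\mathrm{id}$ directly at the level of graph expansions. First, I would substitute the candidate for $I^{-1}(f)$ into formula \eqref{eqb3} for $I$. By Remark \ref{rm3} both expansions run over all strongly connected (not merely semistable) one-pointed graphs, so the Leibniz rule applies with derivatives acting only on vertices, and Lemma \ref{graph2} then reorganises the composition as
\[
I(I^{-1}(f)) = \sum_{G\in\mathcal G_1^{scon}} \frac{\nu^{w(G)}}{|\mathrm{Aut}(G)|}\,\mathcal S(G)\,G,
\]
where
\[
\mathcal S(G) := \!\!\sum_{\substack{H\subset G,\ f\in V(H)\\ H\in\mathcal G_{BT}^{scon}}}\!\! (-1)^{|E(H)|}\,\det\!\bigl(A((G/H)_-)-I\bigr),
\]
and $H$ runs over actual subgraphs of $G$ containing the distinguished vertex. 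The theorem thus reduces to checking $\mathcal S(\bullet)=1$ (which is immediate, since only $H=\bullet$ contributes) together with the combinatorial identity $\mathcal S(G)=0$ for every $G\in\mathcal G_1^{scon}\setminus\{\bullet\}$.

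Next, I would apply the coefficient theorem (Theorem \ref{ct}) to expand
\[
\det\!\bigl(A((G/H)_-)-I\bigr) = \sum_{L\in\mathscr L((G/H)_-)}\prod_{C\in L}(-1)^{\ell(C)+1}.
\]
Since $(G/H)_-$ is simply the induced subdigraph $G[V(G)\setminus V(H)]$, each pair $(H,L)$ packages into a spanning subdigraph $M:=H\sqcup L\subset G$ whose strongly connected components are exactly $H$ (containing $f$, with $H_-$ linear) together with the components of $L$ (each linear). The assignment $(H,L)\mapsto M$ is a bijection onto this ``loose Berezin-Toeplitz'' class $\mathcal M(G)$, with inverse sending $M$ to the pair $(H,L)$ where $H$ is the SCC of $M$ containing $f$. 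Collecting signs using $c(M)=c(L)+1$ and $|E(M)|=|E(H)|+|E(L)|$ converts the identity into
\[
\mathcal S(G) = -\!\!\sum_{M\in\mathcal M(G)}\!(-1)^{|E(M)|+c(M)}.
\]

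The main obstacle is then to show this last sum vanishes whenever $G\ne\bullet$ by producing a sign-reversing, fixed-point-free involution on $\mathcal M(G)$. A natural candidate is to fix a total order on $V(G)\cup E(G)$ and, given $M\in\mathcal M(G)$, locate its smallest ``flexible'' feature --- for example a loop at a vertex $v$ whose presence or absence modifies the linear SCC of $v$ without destroying the loose Berezin-Toeplitz property, or an edge at the boundary of the $f$-SCC that may be included in (respectively removed from) $H$ while preserving both strong connectivity and linearity of $H_-$. Each such toggle flips the parity of $|E(M)|+c(M)$, hence the sign. The non-routine step will be to verify that a canonical toggle is always available when $G\ne\bullet$, that it preserves membership in $\mathcal M(G)$, and that it is involutive; the strong connectivity of $G$ guarantees a non-empty toggle class. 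Once $\mathcal S(G)=0$ for all $G\ne\bullet$ is established, the theorem --- and consequently the Berezin-Toeplitz formula \eqref{eqBT} of Theorem \ref{main} --- follows at once.
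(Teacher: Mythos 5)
Your reduction is the same as the paper's: composing the two expansions and invoking Lemma \ref{graph2} reduces the theorem to the vanishing, for every nontrivial $G\in\mathcal G_1^{scon}$, of
$\sum_{H\in\mathcal B(G)}(-1)^{|E(H)|}\det\bigl(A((G/H)_-)-I\bigr)$,
which is precisely identity \eqref{eqg1}; and your repackaging of each pair $(H,L)$ into a single spanning subgraph $M=H\sqcup L$ via Theorem \ref{ct} is a correct reformulation (up to the slip that the $f$-component $H$ need not have $H_-$ linear --- only the SCCs of $H_-$ are single vertices or cycles). The problem is that essentially all of the difficulty of the theorem resides in the cancellation $\sum_{M\in\mathcal M(G)}(-1)^{|E(M)|+c(M)}=0$, and you have not proved it: you name the sign-reversing involution as ``the non-routine step'' and only gesture at a smallest-flexible-feature toggle. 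This is not a routine verification. Already for $G$ consisting of $\bullet$ and one ordinary vertex joined by a $2$-cycle, $\mathcal M(G)$ has exactly two elements, the edgeless spanning subgraph and the full $2$-cycle, so the required move must add or delete two edges at once while changing the component count; for a vertex carrying a loop the move is a single loop toggle; in general one must exhibit a canonical move that always exists for $G\neq\bullet$, stays inside $\mathcal M(G)$ (strong connectivity and linearity constraints are easy to violate), and is involutive. None of this is done.

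For comparison, the paper discharges exactly this cancellation by a different organization: it fixes a spanning generalized linear subgraph $L\in\mathscr L(G_-)$, groups the terms of \eqref{eqg1} according to $L$ (the SCCs of $H_-$ together with the linear subgraph of $(G/H)_-$ constitute $L$), contracts the cycles of $L$ to points, and thereby reduces the claim to Lemma \ref{graph3}, the identity $\sum_{H}(-1)^{w(H)}=0$ over strongly connected one-pointed subgraphs with acyclic $H_-$. That lemma is then proved by an induction with several nontrivial reduction steps (stripping loops at the distinguished vertex, merging multi-edges, inclusion--exclusion over maximal subgraphs with acyclic complement, and contracting an edge into a source of $G_-$). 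Your outline would become a complete and genuinely alternative proof only if you either construct and verify the involution in full, or substitute an inductive argument of comparable substance; as it stands the central step is missing.
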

In particular, coefficients up to $\nu^3$ in \eqref{eqb5} (keeping only stable graphs) were obtained in \cite{Xu2}.
\begin{multline}\label{eqb8}
I^{-1}(f)=f-\nu\Big[\xymatrix{\bullet
              \ar@(ur,dr)[]^{1}}\Big]
+\nu^2\left(\frac12\Big[\xymatrix{\bullet
           \ar@(ur,dr)[]^{2}}\Big]-\bigg[\xymatrix{
        *+[o][F-]{1} \ar@/^/[r]^1
         &
        \bullet \ar@/^/[l]^1} \bigg]\right)\\
+\nu^3\Bigg(-\frac{1}{6}\Big[\xymatrix{\bullet \ar@(ur,dr)[]^{3}}\Big]+\bigg[\xymatrix{
        *+[o][F-]{1} \ar@/^/[r]^1
         &
        \bullet\, 1 \ar@/^/[l]^1} \bigg]+\frac{1}{4}\bigg[\xymatrix{ \circ \ar@/^/[r]^2 & \bullet \ar@/^/[l]^2
         }\bigg]+\frac{1}{2}\bigg[\xymatrix{
        *+[o][F-]{1} \ar@/^/[r]^1
         &
        \bullet \ar@/^/[l]^2} \bigg]
\\+\frac{1}{2}\bigg[\xymatrix{
        *+[o][F-]{1} \ar@/^/[r]^2
         &
        \bullet \ar@/^/[l]^1} \bigg]-\Bigg[\begin{minipage}{0.6in}$\xymatrix@C=2mm@R=5mm{
                & \bullet \ar[dr]^{1}             \\
        *+[o][F-]{1} \ar[ur]^{1}  & &  *+[o][F-]{1} \ar[ll]^{1}
         }$
         \end{minipage}\Bigg]\Bigg)+O(\nu^4).
\end{multline}
The 24 strongly connected stable graphs in $\mathcal G_{BT}(4)$ are listed in
Table \ref{tb2}.
\begin{table}[h] \footnotesize
\centering \caption{The 24 graphs in $\mathcal G_{BT}(4)$}
\label{tb2}
\begin{tabular}{|c|c|c|c|c|c|}

\hline $\xymatrix{\bullet \,4}$
     & $\xymatrix{
        \circ \ar@/^/[r]^2
         &
        \bullet\,1 \ar@/^/[l]^2} $
     & $\xymatrix{
        \circ \ar@/^/[r]^2
         &
        \bullet \ar@/^/[l]^3} $
     & $\xymatrix{
        \circ \ar@/^/[r]^3
         &
        \bullet \ar@/^/[l]^2} $
     & $\xymatrix{
         *+[o][F-]{1} \ar@/^/[r]^1
         &
        \bullet\, 2 \ar@/^/[l]^1} $
     & $\xymatrix{
         *+[o][F-]{1} \ar@/^/[r]^1
         &
        \bullet\,1 \ar@/^/[l]^2} $
\\
\hline $1/24$ & $-1/4$ & $-1/12$ & $-1/12$ & $-1/2$ & $-1/2$
\\
\hline $\xymatrix{
         *+[o][F-]{1} \ar@/^/[r]^1
         &
        \bullet \ar@/^/[l]^3} $
     & $\xymatrix{
         *+[o][F-]{1} \ar@/^/[r]^2
         &
        \bullet\,1 \ar@/^/[l]^1} $
     & $\xymatrix{
         *+[o][F-]{1} \ar@/^/[r]^2
         &
        \bullet \ar@/^/[l]^2} $
     & $\xymatrix{
         *+[o][F-]{1} \ar@/^/[r]^3
         &
        \bullet \ar@/^/[l]^1} $
     & \begin{minipage}{0.6in}
             $\xymatrix@C=3mm@R=6mm{
                & \bullet \ar[dl]_{1} \ar@/^0.3pc/@<0.7ex>[dr]^{1}           \\
         *+[o][F-]{1}  \ar[rr]_{1} & & \circ
         \ar@/^-0.3pc/@<0.2ex>[ul]^{2}
         }$
         \end{minipage}
     &\begin{minipage}{0.6in}
             $\xymatrix@C=3mm@R=6mm{
                & \bullet \ar[dl]_{2}            \\
         \circ  \ar[rr]_{2} & & \circ
         \ar[ul]_{2}
         }$
         \end{minipage}
\\
\hline $-1/6$ & $-1/2$ & $-1/4$ & $-1/6$ & $1/2$ & $1/8$
\\
\hline  \begin{minipage}{0.6in}
             $\xymatrix@C=3mm@R=6mm{
                & \bullet \ar[dl]_{1}            \\
         *+[o][F-]{1}  \ar[rr]_{2} & & \circ
         \ar[ul]_{2}
         }$
         \end{minipage}
     & \begin{minipage}{0.6in}
             $\xymatrix@C=3mm@R=6mm{
                & \bullet \ar@/^0.3pc/@<0.7ex>[dr]^{2}           \\
         *+[o][F-]{1} \ar[ur]^{1}  & & \circ
         \ar@/^-0.3pc/@<0.2ex>[ul]^{1}\ar[ll]^{1}
         }$
         \end{minipage}
  &  \begin{minipage}{0.6in}
             $\xymatrix@C=3mm@R=6mm{
                & \bullet \ar@/^-0.3pc/@<0.2ex>[dl]^{1} \ar@/^0.3pc/@<0.2ex>[dr]^{1}          \\
         \circ \ar@/^0.3pc/@<0.7ex>[ur]^{1} \ar@/^-0.3pc/@<0.2ex>[rr]^{1} & &
         \circ
         \ar@/^0.3pc/@<0.7ex>[ll]^{1} \ar@/^-0.3pc/@<0.7ex>[ul]^{1}
         }$
         \end{minipage}
     & \begin{minipage}{0.6in}$\xymatrix@C=3mm@R=6mm{
                & \bullet \ar[dr]^{2}             \\
         *+[o][F-]{1} \ar[ur]^{1}  & &  \circ  \ar[ll]^{2}
         }$
         \end{minipage}
     &  \begin{minipage}{0.6in}
             $\xymatrix@C=3mm@R=6mm{
                & \bullet \ar@/^-0.3pc/@<0.2ex>[dl]^{1} \ar@/^0.3pc/@<0.2ex>[dr]^{1}          \\
          *+[o][F-]{1} \ar@/^0.3pc/@<0.7ex>[ur]^{1} & &
          *+[o][F-]{1}
          \ar@/^-0.3pc/@<0.7ex>[ul]^{1}
         }$
         \end{minipage}
     &  \begin{minipage}{0.6in}
             $\xymatrix@C=2mm@R=6mm{
                & \bullet\,1 \ar[dl]_{1}            \\
         *+[o][F-]{1} \ar[rr]_{1} & & *+[o][F-]{1}
         \ar[ul]_{1}
         }$
         \end{minipage}
\\
\hline $1/4$ & $1/2$ & $1/2$ & $1/4$ & $1/2$ & $1$
\\
\hline  \begin{minipage}{0.6in}
             $\xymatrix@C=3mm@R=6mm{
                & \bullet \ar[dl]_{2}            \\
         *+[o][F-]{1}  \ar[rr]_{1} & & *+[o][F-]{1}
         \ar[ul]_{1}
         }$
         \end{minipage}
     & \begin{minipage}{0.6in}
             $\xymatrix@C=3mm@R=6mm{
                & \bullet \ar@/^0.3pc/@<0.7ex>[dr]^{1} \ar[dl]_{1}           \\
         *+[o][F-]{1} \ar[rr]_{1}  & & *+[o][F-]{1}
         \ar@/^-0.3pc/@<0.2ex>[ul]^{1}
         }$
         \end{minipage}
  &  \begin{minipage}{0.6in}
             $\xymatrix@C=3mm@R=6mm{
                & \bullet \ar@/^-0.3pc/@<0.2ex>[dl]^{1}           \\
         *+[o][F-]{1} \ar@/^0.3pc/@<0.7ex>[ur]^{1} \ar[rr]_{1} & &
         *+[o][F-]{1}
          \ar[ul]_{1}
         }$
         \end{minipage}
     & \begin{minipage}{0.6in}
             $\xymatrix@C=3mm@R=6mm{
                & \bullet \ar[dl]_{1}            \\
         *+[o][F-]{1} \ar[rr]_{2} & & *+[o][F-]{1}
         \ar[ul]_{1}
         }$
         \end{minipage}
     &  \begin{minipage}{0.6in}
             $\xymatrix@C=3mm@R=6mm{
                & \bullet \ar[dl]_{1}            \\
         *+[o][F-]{1} \ar[rr]_{1} & & *+[o][F-]{1}
         \ar[ul]_{2}
         }$
         \end{minipage}
     & \begin{minipage}{0.7in}\xymatrix{
  \bullet  \ar[d]_{1}
                &*+[o][F-]{1}\ar[l]_{1}  \\
*+[o][F-]{1}   \ar[r]_1
                & *+[o][F-]{1}    \ar[u]_1       }
                \end{minipage}
\\
\hline $1/2$ & $1$ & $1$ & $1/2$ & $1/2$ & $-1$
\\
\hline
\end{tabular}
\end{table}

\begin{remark}\label{rm2}
Gammelgaard's universal formula \cite{Gam} was expressed as a summation over acyclic graphs with two external vertices (one sink and one source).
All internal vertices are weighted. We refer the reader to \cite{Sch4} for a brief summary of Gammelgaard's work.
In fact, if we identify the two external vertices,
then we get a strongly connected one-pointed graph $\Gamma$ such that $\Gamma_-$ is acyclic.
In the case of Berezin-Toeplitz quantization, by Karabegov-Schlichenmaier's identification theorem, a vertex of weight $-1$
is just a single vertex without loops and a vertex of weight $0$ comes from the Ricci curvature, which may be regarded as
a vertex with exactly one loop. There are no vertices of weight greater than $0$.
If we repeatedly take derivatives on a vertex of weight $0$ by Leibniz rule, we will get a cycle (see \eqref{eqginv} and Figure \ref{figedge}).
Finally we get a graph in $\mathcal G_{BT}$. It is not difficult to prove that Gammelgaard's formula for Berezin-Toeplitz quantization is equivalent
to \eqref{eqBT}
in Theorem \ref{main} by using Lemma \ref{graph2} and Remark \ref{rm3}. For general quantizations,
we may roughly say that
the cycles in the graphs were absorbed into vertices with weight $\geq 0$ in Gammelgaard's formula.
\end{remark}

By Lemma \ref{graph2} and \eqref{eqb3}, in order to prove \eqref{eqb5}, we need only prove the following purely graph theoretic theorem.
\begin{theorem}
Let $G\in\mathcal G_1^{scon}$ be a nontrivial strongly connected one-pointed graph. Then
\begin{equation}\label{eqg1}
\sum_{H\in \mathcal B(G)}(-1)^{|E(H)|}\det\big(A\big((G/H)_-\big)-I\big)=0,
\end{equation}
where $\mathcal B(G)$ consists of all strongly connected one-pointed subgraphs $H$ of $G$ with $H\in\mathcal G_{BT}^{scon}$.
\end{theorem}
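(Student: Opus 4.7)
The plan is to apply the coefficient theorem (Theorem \ref{ct}) to expand the determinant, then establish \eqref{eqg1} via a sign-reversing involution on the resulting double sum. Writing
\[\det(A((G/H)_-)-I)=\sum_{L\in\mathscr{L}((G/H)_-)}\prod_{C\in L}(-1)^{\ell(C)+1},\]
\eqref{eqg1} is equivalent to
\[\sum_{(H,L)}(-1)^{|E(H)|}\prod_{C\in L}(-1)^{\ell(C)+1}=0,\]
where the sum runs over pairs $(H,L)$ with $H\in\mathcal{B}(G)$ and $L$ a spanning generalized linear subgraph of $(G/H)_-$. For each such pair, the edge-disjoint union $K:=H\cup L$ is a spanning subgraph of $G$ containing $\bullet$, and its non-distinguished part $K_-$ has all strongly connected components equal to single vertices or cycles (both the SCCs of $H_-$ and the components of $L$ are such, and the two vertex sets are disjoint with no edges between them). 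A short computation rewrites the sign of each pair as $(-1)^{|E(K)|+p(L)}$, where $p(L)$ denotes the number of components of $L$.

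The primary involution toggles ``cycle'' versus ``isolated vertices'' configurations within $L$. Fix a total order on the cycles of $G$. For a pair $(H,L)$, locate the smallest cycle $C$ of $G$ with $V(C)\subseteq V(L)$ such that the restriction of $L$ to $V(C)$ consists either of exactly the single cycle $C$ or of exactly $\ell(C)$ isolated vertices; the toggle swaps these two configurations. Since toggling changes $|E(K)|$ by $\pm\ell(C)$ and $p(L)$ by $\pm(\ell(C)-1)$, the total sign flips. The fixed points of this involution are the pairs for which $(G/H)_-$ contains no directed cycle, i.e.\ those in which $L$ consists entirely of isolated vertices and $V(H_-)$ is a feedback vertex set of $G_-$.

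It remains to show that the fixed-point sum
\[\sum_{\substack{H\in\mathcal{B}(G)\\ (G/H)_- \text{ acyclic}}}(-1)^{|E(H)|+|V(G_-)|-|V(H_-)|}\]
vanishes, which requires a secondary involution on $H$. Natural candidates are: toggle a self-loop of $G$ at $\bullet$ belonging to $H$; toggle a self-loop at a single-vertex SCC of $H_-$; or toggle an edge inside $V(H_-)$ that swaps between ``two single-vertex SCCs joined by an inter-SCC edge'' and ``a single $2$-cycle SCC''. Each such toggle flips $|E(H)|$ by $1$ while preserving $V(H_-)$, the acyclicity of $(G/H)_-$, and the conditions defining $\mathcal{B}(G)$. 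The hardest step will be to guarantee that at least one such toggleable feature is present for every fixed-point $H$ — essentially, to rule out ``edge-minimally rigid'' configurations; I expect this to follow from the nontriviality and strong connectivity of $G$, combined with a case analysis organized around whether $\bullet$ carries self-loops in $G$, whether $H_-$ contains cycle SCCs of length $\geq 2$, and whether the remaining single-vertex SCCs of $H_-$ admit the chord-edge toggle described above.
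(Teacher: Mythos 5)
Your reduction of \eqref{eqg1} to a signed sum over pairs $(H,L)$ via Theorem \ref{ct}, and the sign bookkeeping $(-1)^{|E(K)|+p(L)}$, are correct, and your overall strategy runs parallel to the paper's, which likewise combines the cycle/singleton SCC structure of $H_-$ with a spanning generalized linear subgraph of $(G/H)_-$ into a single element of $\mathscr L(G_-)$ and cancels within each class. But there are two genuine gaps. First, your primary involution is not well defined as stated. If a cycle $C_2$ of $L$ meets a smaller cycle $C_1$ of $G$ in a single vertex, then under the literal reading of ``the restriction of $L$ to $V(C_1)$ consists of $\ell(C_1)$ isolated vertices'' the cycle $C_1$ is already eligible, yet toggling it on would give a vertex of in- and out-degree $2$, destroying the generalized-linear-subgraph property; under the charitable reading (every vertex of $C_1$ is a singleton component of $L$), $C_1$ is not eligible before the toggle of $C_2$ but becomes eligible and smaller after it, so applying the map twice does not return to the original pair. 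This is repairable with a more careful canonical choice, but it must be done.

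Second, and more seriously, the entire weight of the argument rests on the secondary involution on the fixed-point set $\{H\in\mathcal B(G)\mid (G/H)_-\text{ acyclic}\}$, and you have not constructed it: you list three candidate local toggles and state that you ``expect'' every fixed point to admit at least one. This is exactly where the paper does its real work: its Lemma \ref{graph3}, the vanishing of $\sum_H(-1)^{w(H)}$ over strongly connected one-pointed subgraphs with $H_-$ acyclic, is proved by a multi-step induction (stripping loops at the distinguished vertex, merging multi-edges, an inclusion--exclusion over maximal subgraphs with acyclic complement, and contraction of a source edge), not by a single local move. Your candidate toggles also face concrete obstructions you do not address: deleting one edge of a $2$-cycle SCC of $H_-$ can destroy the strong connectivity of $H$, so the ``chord-edge'' toggle is not always available in either direction; loops at $\bullet$ or at singleton SCCs need not exist; and when several toggles are available you must fix a canonical one to obtain an involution. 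Until the fixed-point cancellation is actually established, the proof is incomplete at its crucial step.
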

\begin{proof}
First we consider a specific graph $G$ of the form as depicted in Figure \ref{fig}.
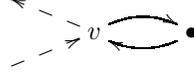
\begin{figure}[h]
\begin{tabular}{c}$\xymatrix@C=9mm@R=2mm
{&   & \\
& v \ar@/^/[r]\ar@{-->}[ul]  & \bullet \ar@/^/[l]\\
\ar@{-->}[ur] & & }$ \end{tabular}
\caption{A one-pointed graph}
 \label{fig}
\end{figure}
Namely there exists an ordinary vertex
$v$ of $G$ such that there is no other edge connected to $\bullet$ besides an edge from $v$ to $\bullet$
and an edge from $\bullet$ to $v$.
Then the graph in $\mathcal B(G)$ is either the trivial graph $\bullet$ or the graph with a cycle of
length $\ell\geq0$ attached at $v$. By Theorem \ref{ct}, the left-hand side of \eqref{eqg1} is equal to
\begin{align*}
\det(A(G_-)-I)+\sum_{L\in\mathscr L(G_-)}(-1)^{\ell(C_v)}\prod_{C\in L\atop v\notin C}(-1)^{\ell(C)+1}=0,
\end{align*}
where $C_v$ is the cycle in $L$ containing $v$.

For general graph $G\in\mathcal G_1^{scon}$ and a fixed subgraph $H\in\mathcal B(G)$, the strongly connected components
of $H$ and a
spanning generalized linear subgraph of $(G/H)_-$ together constitute a spanning generalized linear subgraph of $G_-$.
On the other hand, for a fixed spanning generalized linear subgraph $L\in\mathscr L(G_-)$, we may compute the contributions
of the summation in \eqref{eqg1} to $L$.
In fact, we may contract each cycle in $L$ to a vertex, then it is not difficult to see from the following Lemma \ref{graph3}
that the contributions add up to zero for each $L\in\mathscr L(G_-)$. So we proved \eqref{eqg1}.
\end{proof}

\begin{lemma}\label{graph3}
Let $G\in\mathcal G_1^{scon}$ be a nontrivial strongly connected one-pointed graph. Then
\begin{equation}\label{eqg2}
\sum_{H=(V\cup\{f\},E)\in \mathcal S(G)}(-1)^{w(H)}=0,
\end{equation}
where $\mathcal S(G)$ consists of all strongly connected one-pointed subgraphs $H$ of $G$ with $H\in\mathcal G_{S}^{scon}$
and $w(H)=|E|-|V|$ is the weight of $H$.
\end{lemma}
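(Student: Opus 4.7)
My plan is to prove the identity by two simple reductions followed by an involution/inductive argument.

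First I dispose of the self-loop cases, which are essentially trivial. A loop at $\bullet$ can be freely added to or removed from any $H\in \mathcal{S}(G)$ without affecting strong connectivity or the condition that each SCC of $H_-$ be a single loopless vertex (loops at $\bullet$ are not in $H_-$). Hence the sum factors as
\[
\sum_{H\in \mathcal{S}(G)}(-1)^{w(H)}
=(1-1)^{|L(\bullet)|}\sum_{H_0\in \mathcal{S}_0(G)}(-1)^{w(H_0)},
\]
where $L(\bullet)$ is the set of loops at $\bullet$ and $\mathcal{S}_0(G)$ consists of those $H$ with no loop at $\bullet$. If $|L(\bullet)|\geq 1$ the identity is immediate, so I assume $\bullet$ is loop-free. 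Loops at non-$\bullet$ vertices can never occur in any $H$ (each SCC of $H_-$ must be loopless), so they may be deleted from $G$.

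Next I apply the vertex-splitting $\bullet\mapsto(s,t)$, where $s$ inherits the outgoing edges and $t$ the incoming edges of $\bullet$. This gives a bijection between $\mathcal{S}(G)$ and the family $\widetilde{\mathcal{S}}(G^{\ast})$ consisting of the degenerate subgraph $\{s,t\}$ together with all DAG subgraphs $H^{\ast}\subseteq G^{\ast}$ in which every vertex lies on some directed $s$--$t$ path. Under the bijection $w(H^{\ast})=w(H)-1$, so the identity to prove becomes
$\sum_{H^{\ast}\in\widetilde{\mathcal{S}}(G^{\ast})}(-1)^{w(H^{\ast})}=0$, and $G$ nontrivial strongly connected translates into the existence of at least one simple $s$--$t$ path in $G^{\ast}$.

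I would then proceed by induction on $n=|V(G^{\ast})\setminus\{s,t\}|$. The base cases $n=0,1$ are direct binomial computations: for $n=0$ the sum is $(-1)^{-2}+0=1\neq0$ only when no $s$--$t$ edge exists, which is excluded by nontriviality; for $n=1$ with $p$ edges $s\to v$ and $q$ edges $v\to t$, the sum evaluates to $1+((1-1)^{p}-1)((1-1)^{q}-1)=0$. For $n\geq 2$, pick a vertex $v\in V(G^{\ast})\setminus\{s,t\}$ lying on some $s$--$t$ path, and partition $\widetilde{\mathcal{S}}(G^{\ast})$ according to whether $v\in V(H^{\ast})$. The "$v\notin V(H^{\ast})$" contribution is exactly $\sum_{H^{\ast}\in\widetilde{\mathcal{S}}(G^{\ast}\setminus v)}(-1)^{w(H^{\ast})}$, which vanishes by the inductive hypothesis after a strong-connectivity reduction of $G^{\ast}\setminus v$. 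For the "$v\in V(H^{\ast})$" contribution I would perform an inclusion-exclusion over the in/out edges of $v$ in $H^{\ast}$, whose alternating sum splits as a product of two $(1-1)^k$ factors, giving $0$ again.

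The main obstacle will be handling the case where $G^{\ast}\setminus v$ fails to satisfy the inductive hypothesis because some vertex no longer lies on an $s$--$t$ path (i.e., $v$ is on every $s$--$t$ path through certain other vertices). In that case the clean "remove $v$" decomposition breaks, and I expect to need an alternative involution: toggle, in a fixed edge order, the smallest locally-togglable edge $e$ of $H^{\ast}$---an edge whose insertion or deletion preserves both the DAG property and the "every vertex on an $s$--$t$ path" property. Establishing that such an $e$ always exists for $H^{\ast}\neq\{s,t\}$, and that the pairing $H^{\ast}\mapsto H^{\ast}\triangle\{e\}$ is a genuine involution (so that $e$ is also minimally togglable in $H^{\ast}\triangle\{e\}$), is the delicate combinatorial step and is where the bulk of the work will lie.
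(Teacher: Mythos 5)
Your opening reductions (killing the sum when $\bullet$ carries a loop, discarding loops at ordinary vertices) coincide with the first step of the paper's proof, and the $s$--$t$ splitting is a harmless reformulation. The genuine gap is in the inductive step. The claim that the ``$v\in V(H^{\ast})$'' contribution splits as a product of two $(1-1)^{k}$ factors is not justified and is false in general: the defining conditions of $\widetilde{\mathcal S}(G^{\ast})$ --- that $H^{\ast}$ be acyclic and that \emph{every} vertex of $H^{\ast}$ lie on a directed $s$--$t$ path --- are global constraints coupling the edges at $v$ to the rest of $H^{\ast}$. Deleting an in-edge $u\to v$ can strand $u$ (if its only route to $t$ ran through $v$), and adding an edge can create a directed cycle when $G^{\ast}\setminus\{s,t\}$ is not itself acyclic; so the edges at $v$ cannot be toggled independently and no binomial factorization occurs. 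You correctly flag the companion failure of the ``$v\notin V(H^{\ast})$'' term and propose to repair everything with a ``minimal locally-togglable edge'' involution, but establishing that such an edge exists and that the toggle pairs the family off sign-reversingly \emph{is} the lemma; nothing in the proposal proves it. (There is also a bookkeeping slip in the $n=1$ base case: as written, $1+((1-1)^{p}-1)((1-1)^{q}-1)=2$; the offset $(-1)^{i+j-1}$ coming from $w(H)=i+j-1$ was dropped.)

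The paper closes exactly this gap by two reductions you do not have. First it merges multi-edges (a binomial identity shows the sum is unchanged). Second --- the decisive step --- it reduces to the case where $G_{-}$ is acyclic, by writing $G$ as a union of its maximal strongly connected subgraphs $G'$ with $G'_{-}$ acyclic, noting every $H\in\mathcal S(G)$ lies in at least one such $G'$, and applying inclusion--exclusion together with induction on the number of edges. Once $G_{-}$ is a DAG it has a source $v$, i.e.\ an ordinary vertex all of whose in-edges come from $\bullet$; contracting the (now unique) edge $e$ from $\bullet$ to $v$ gives a weight-preserving bijection $\mathcal S(G)\cong\mathcal S(G/\{e\})$, and induction on the number of vertices finishes. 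In your language: the vertex to remove is not an arbitrary vertex on an $s$--$t$ path but a source of $G^{\ast}\setminus\{s,t\}$, for which the edge-toggling genuinely decouples, and you need the acyclicity reduction to guarantee such a vertex exists. Without those two steps the proposal does not constitute a proof.
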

\begin{proof}
First we note that the lemma still holds for any one-pointed graph $G\in \overline{\mathcal G}_1$ such that
the strongly connected component containing the distinguished vertex $\bullet$ has at least one edge.

Next we prove that if the distinguished vertex of $G$ has $k>0$ loops, denoted by $\{e_1,\dots,e_k\}$, then
\eqref{eqg2} holds. Denote by $\mathcal S'(G)$ the graphs $H$ in $\mathcal S(G)$ such that $H$ has no loops
at the distinguished vertex. Then each graph in $\mathcal S(G)$ is obtained by attaching some loops in
$\{e_1,\dots,e_k\}$ to a graph in $\mathcal S'(G)$. So the left-hand side of \eqref{eqg2} is equal to
$$\sum_{H\in \mathcal S'(G)}\sum_{i=0}^k\binom{k}{i}(-1)^{w(H)+i},$$
which is zero if $k>0$.

In order to prove \eqref{eqg2}, we may assume that $G$ has no loops or multi-edges. In fact, if there are $k>1$
edges from $v_1$ to another vertex $v_2$, denote by $G'$ the graph obtained from $G$ by merging the $k$ edges
a single edge $e$, we have
\begin{align*}
\sum_{H\in \mathcal S(G)}(-1)^{w(H)}&=\sum_{H\in \mathcal S(G')\atop e\notin H}(-1)^{w(H)}
+\sum_{H\in \mathcal S(G')\atop e\in H}\sum_{i=1}^k\binom{k}{i}(-1)^{w(H)+i-1}\\
&=\sum_{H\in \mathcal S(G')\atop e\notin H}(-1)^{w(H)}
+\sum_{H\in \mathcal S(G')\atop e\in H}(-1)^{w(H)}\\
&=\sum_{H\in \mathcal S(G')}(-1)^{w(H)}.
\end{align*}

We may also assume that $G_-$ is acyclic. This is because $G$ can be written as a finite union of maximal strongly connected subgraphs $G'$
with $G'_-$ acyclic
and each $H\in\mathcal B(G)$ must lies in at least one of these $G'$, so we can apply the inclusion-exclusion principle and use
induction on the number of edges of $G$.

If $G_-$ is acyclic, then there exists an ordinary vertex $v$ which is a source of $G_-$, i.e. all edges entering $v$
must come from the distinguished vertex $\bullet$. Since $G$ is strongly connected, so there exists an edge $e$ from $\bullet$
to $v$. If we contract $e$ and absorbs $v$ into $\bullet$ in $G$, we get a new strongly connected graph $G'$ with less number
of vertices. It is not difficult
to see that $H\in \mathcal S(G)$ are in one-to-one correspondence with $H'\in \mathcal S(G')$ given by
$$H'=\begin{cases} H
&\mbox{if } e\notin H;
\\ H/\{e\} & \mbox{if } e\in H,
\end{cases}$$ 
where $H/\{e\}$ is obtained by contracting $e$ in $H$. Moreover $w(H)=w(H')$.
Namely we have
$$\sum_{H\in \mathcal S(G)}(-1)^{w(H)}=\sum_{H'\in \mathcal S(G')}(-1)^{w(H')}.$$ 
So the lemma follows by induction.
\end{proof}

Now we verify the associativity of Berezin-Toeplitz star product directly from \eqref{eqBT}.
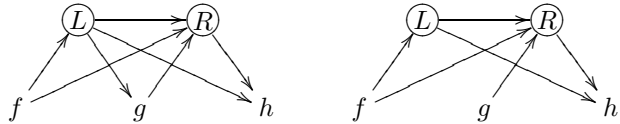
\begin{figure}[h]
$\xymatrix@C=4mm@R=7mm
{& *+[o][F-]{L}\ar[rr]\ar[drrr]\ar[dr] & &*+[o][F-]{R} \ar[dr] &\\
 f\ar[ur]\ar[urrr] & &g\ar[ur] & &h} $
 \qquad
$\xymatrix@C=4mm@R=7mm
{& *+[o][F-]{L}\ar[rr]\ar[drrr] & &*+[o][F-]{R} \ar[dr] &\\
 f\ar[ur]\ar[urrr] & &g\ar[ur] & &h}$
 \caption{Two graphs with 3 distinguished vertices} \label{figthreept}
\end{figure}

\begin{proposition} For any three functions $f,g,h$ on a K\"ahler manifold, we have
\begin{equation}\label{eqb6}
(f\star_{BT} g)\star_{BT} h =f\star_{BT}(g\star_{BT} h).
\end{equation}
\end{proposition}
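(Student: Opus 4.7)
The plan is to expand both sides of \eqref{eqb6} via \eqref{eqBT} applied twice and reassemble, using Lemma \ref{graph2}, as a single sum over three-pointed graphs with distinguished vertices $f,g,h$ of the kind depicted in Figure \ref{figthreept}. First I would write $f\star_{BT} g = \sum_{\Gamma_1\in\mathcal{G}_{BT}^{ss}} \nu^{w(\Gamma_1)}(-1)^{|E(\Gamma_1)|}|{\rm Aut}(\Gamma_1)|^{-1}\Gamma_1(f,g)$, substitute this into the outward slot of $\Gamma_2(-,h)$ for each $\Gamma_2\in\mathcal{G}_{BT}^{ss}$, and group by isomorphism class of the combined graph. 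This yields
\begin{equation*}
(f\star_{BT} g)\star_{BT} h = \sum_{G}\frac{\nu^{w(G)}(-1)^{|E(G)|}}{|{\rm Aut}(G)|}\,c_L(G)\,G(f,g,h),
\end{equation*}
where $c_L(G)$ counts the realizations of $G$ as a nested pair $(\Gamma_2,\Gamma_1)$ with $\Gamma_1$ the inner $(f,g)$-piece. The weight, sign, and automorphism factor add/multiply under nesting exactly so as to give the displayed formula, and this is the content of Lemma \ref{graph2} in this iterated setting.

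The right-hand side of \eqref{eqb6} admits an identical expansion with $c_L(G)$ replaced by the count $c_R(G)$ over decompositions whose inner piece carries $(g,h)$. Since $\nu^{w(G)}$, $(-1)^{|E(G)|}$, the automorphism factor, and $G(f,g,h)$ depend only on $G$ and not on the choice of decomposition, associativity reduces to the graph-theoretic identity $c_L(G)=c_R(G)$ for every admissible three-pointed graph $G$.

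The main obstacle is establishing $c_L(G)=c_R(G)$. The natural approach is to exhibit a common symmetric quantity enumerated by both counts: ordered partitions of the ordinary vertex set of $G$ into three blocks $V_f\sqcup V_g\sqcup V_h$ such that every strongly connected component of $G_-$ lies inside a single block, ensuring the BT-type condition \eqref{graphBT} is preserved under both the $(f,g)$- and $(g,h)$-mergers. The locality of \eqref{graphBT}, which is an SCC-by-SCC condition, is precisely what makes such a triple encode left and right decompositions equivalently: merging $V_f\cup V_g$ produces the inner $\Gamma_1$ of a left decomposition while merging $V_g\cup V_h$ produces the inner graph of a right decomposition, and both resulting pieces lie in $\mathcal{G}_{BT}^{ss}$ by the SCC-compatibility. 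As an \emph{a posteriori} cross-check, associativity of $\star_{BT}$ is automatic from Theorem \ref{sch}, since \eqref{eqBT} realizes composition of Toeplitz operators; the direct graph-theoretic verification thus serves as a combinatorial consistency check for the formula \eqref{eqBT}.
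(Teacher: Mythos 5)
Your reduction of associativity to a coefficient-wise identity $c_L(G)=c_R(G)$ over strongly connected three-pointed graphs $G$, with weights, signs and automorphism factors matched via Lemma \ref{graph2}, is exactly the framework of the paper's proof. The gap is in the step that is supposed to establish $c_L(G)=c_R(G)$. A left decomposition of $G$ is determined by a \emph{bipartition} of the ordinary vertices (those inside the inner $(f,g)$-piece versus those outside); it carries no preferred splitting of the inner set into a part ``belonging to $f$'' and a part ``belonging to $g$''. Hence your map from ordered tripartitions $V_f\sqcup V_g\sqcup V_h$ to left decompositions (sending a tripartition to the inner set $V_f\cup V_g$) is not injective, and you give no reason why the number of admissible tripartitions should equal $c_L(G)$ rather than overcount it. Moreover, the condition you impose --- each SCC of $G_-$ lies in a single block --- does not characterize admissible decompositions. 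A valid inner piece $H'$ must in addition receive no edges from its complement (the outer graph's edges are distributed by Leibniz onto vertices of the inner piece and point \emph{outward}), both $\dot H'$ and $\dot{(G/H')}$ must be strongly connected, and the distinguished vertices must have the correct in/out structure ($f$ only outgoing, $h$ only incoming, $g$ receiving edges only from the inner piece and sending edges only to the outer piece). None of these directional constraints is captured by SCC-locality, and they are precisely what makes the two counts agree.

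What the paper's proof rests on is: (i) the decomposition, when it exists, is \emph{unique} --- if $S_1,S_2$ are two admissible inner vertex sets, strong connectivity of $\dot H'_{S_2}$ gives a directed path from any $v\in S_2$ to $g$ inside $S_2\cup\{f,g\}$, and the no-incoming-edges condition for $S_1$ propagates membership in $S_1$ backwards along that path, yielding $S_2\subseteq S_1$ and, by symmetry, equality; thus $c_L(G),c_R(G)\in\{0,1\}$; and (ii) a case analysis on the two shapes in Figure \ref{figthreept} (according to whether $g$ has incoming edges, i.e.\ whether the inner left piece is trivial) showing that a left decomposition exists if and only if a right one does. You would need to supply both steps. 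Finally, the appeal to Theorem \ref{sch} is circular in this context: it gives associativity of the operator-theoretic star product, whereas the Proposition is a direct combinatorial verification that the graph formula \eqref{eqBT} is associative, which is only equivalent once Theorem \ref{main} itself is known.
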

\begin{proof}
By \eqref{eqBT},
\begin{equation}\label{eqb7}
f\star_{BT} g=\sum_{\Gamma\in\mathcal G_1^{scon}}
\nu^{w(h)}\frac{(-1)^{|E|}}{{\rm Aut}(\Gamma)} \Gamma(f, g).
\end{equation}

We need to prove that for any strongly connected three-pointed graph $G=(V\cup\{f,g,h\},E)$, the
coefficients of $G$ at both sides of \eqref{eqb6} are equal.
All graphs $G$ that may appear in \eqref{eqb6} has two typical forms as depicted in in Figure \ref{figthreept} (each arrow may represent multi-edges).

We denote by $\dot G$ the one-pointed graph obtained by
merging the 3 distinguished vertices of $G$ into one vertex (see Definition \ref{dfdot}).

If $G$ is the first graph, then the contributions of the two sides of \eqref{eqb6} come from $\dot R(\dot L(f,g),h)$ and $\dot L(f,\dot R(g,h))$ respectively,
the choices of subgraphs $L$ and $R$ in $G$ are unique by their strongly connectedness. So they are equal by Lemma \ref{graph2} and \eqref{eqb7}.

If $G$ is the second graph, then the contributions of the two sides of \eqref{eqb6} come from $\dot G(\dot fg,h)$ and $\dot L(f,\dot R(g,h))$ respectively,
which are again equal by Lemma \ref{graph2} and \eqref{eqb7}.
\end{proof}

\vskip 30pt
\section{Karabegov form of a star product} \label{Kara}

Let $(M,\omega_{-1})$ be a pseudo-K\"ahler
manifold, i.e. we only assume $\omega_{-1}$ to be a nondegenerate closed
$(1,1)$-form. A formal deformation of the form $(1/\nu)\omega_{-1}$ is a formal
$(1,1)$-form,
\begin{equation}\label{eqomega}
  \hat\omega = \frac{1}{\nu} \omega_{-1} + \omega_0 + \nu\omega_1 + \nu^2\omega_2
  + \cdots,
\end{equation}
where each $\omega_k$ is a closed $(1,1)$-form. Karabegov \cite{Kar} has
shown that deformation quantizations with separation of variables on
the pseudo-K\"ahler
manifold $(M,\omega_{-1})$ are bijectively parametrized by such formal
deformations.

Given a star product $\star$ of anti-Wick type, its Karabegov form is computed as following:
Let $z^1, \ldots, z^n$ be local holomorphic coordinates on
an open subset $U$ of $M$. Karabegov proved that there exists a set of formal functions on $U$, denoted by
$u^1, \ldots, u^n$,
\begin{align*}
  u^k = \frac{1}{\nu} u^k_{-1} + u^k_0 + \nu u^k_1  + \nu^2 u^k_2
 + \cdots,
\end{align*}
satisfying
$u^k \star z^l - z^l \star u^k = \delta^{kl}$.
Then the classifying Karabegov form of $\star$, which is a global form on
$M$, is given by $\hat\omega \vert_U = - \sqrt{-1}\,\bar\partial (\sum_k u^k dz^k)$
on the coordinate neighborhood $U$.

In a remarkable paper \cite{KS}, Karabegov and Schlichenmaier identified the corresponding Karabegov forms for Berezin and Berezin-Toeplitz star
products on compact K\"ahler manifolds.
Their original proof uses microlocal analysis to obtain off-diagonal expansion of
Bergman kernel, then they derived integral representations of Berezin transform and a newly introduced twisted product in terms of off-diagonal Bergman kernels.
The integral representations were used to prove the existence of their asymptotic expansions. Finally the Berezin-Toeplitz star product was identified by
applying Schlichenmaier's
Theorem \ref{sch}, and the Berezin star product was identified using Zelditch's theorem \cite{Zel}.

We hope our approach through the graph theoretic formulae of Berezin transform
and Bergman kernel could provide further insights to Karabegov-Schlichenmaier's identification theorem.
\begin{theorem}[\cite{KS}]\label{thmKS}
{\rm (i)}
The Karabegov form of Berezin-Toeplitz star product is
$$-\frac1\nu\omega_{-1}+Ric,$$
where $Ric=\sqrt{-1}\partial\bar\partial\log\det g$ is the Ricci curvature.

{\rm (ii)}
The Karabegov form of Berezin star product is
$$\frac1\nu\omega_{-1}+\sqrt{-1}\,\partial\bar\partial\log B(x),$$
where $B(x)$ is the Bergman kernel given in \eqref{eqb2}.
\end{theorem}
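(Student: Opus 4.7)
My plan is to combine Karabegov's parametrization of separation-of-variables star products by formal K\"ahler potentials with the graph expansions \eqref{eqb2} and \eqref{eqb3}.

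For part (ii), recall that for any anti-Wick star product $\star$ there exists a formal K\"ahler potential $\Phi_\star=\tfrac{1}{\nu}\Phi+\phi_0+\nu\phi_1+\cdots$ such that $u^k:=\partial_k\Phi_\star$ solves $u^k\star z^l-z^l\star u^k=\delta^{kl}$; consequently $\hat\omega=\sqrt{-1}\,\partial\bar\partial\Phi_\star$. I would prove $\Phi_{\star_B}=\tfrac{1}{\nu}\Phi+\log B$. Since $c_{j\emptyset\beta}=0$ for $j\geq 1$ (forced by $1\star_B h=h$) and $\partial^\alpha z^l$ is supported on $\alpha\in\{\emptyset,(l)\}$, the defining commutator with the candidate $u^k=\partial_k(\tfrac{1}{\nu}\Phi+\log B)$ collapses to
\[
\sum_{j\geq 1}\nu^j\sum_\beta c_{j(l)\beta}\,\bar\partial^{\beta}\partial_k\!\Bigl(\tfrac{1}{\nu}\Phi+\log B\Bigr)=\delta^{kl},
\]
where $Q_j=\sum c_{j\alpha\beta}\partial^\alpha\bar\partial^\beta$. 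Using \eqref{eqb3}, the left side is a formal sum over strongly connected one-pointed graphs; each antiholomorphic differentiation of $\partial_k(\tfrac{1}{\nu}\Phi)$ produces a metric factor $g_{k\bar{\cdot}\cdots}$, and each antiholomorphic differentiation of $\partial_k\log B$ produces, via the exponential structure in \eqref{eqb2}, a connected semistable subgraph with a $k$-labelled external vertex. A combinatorial bookkeeping analogous to the proof of \eqref{eqg1}---grouping terms by strongly connected components and invoking Lemma \ref{graph2} and Theorem \ref{ct}---then shows that all higher-order contributions cancel, leaving only $\delta^{kl}$.

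For part (i), I would perform the parallel computation for the Wick star product $\star_{BT}$. The natural extension of Karabegov's setup to Wick type yields $\hat\omega_{BT}=\sqrt{-1}\,\partial\bar\partial\Phi_{\star_{BT}}$ with $\Phi_{\star_{BT}}=-\tfrac{1}{\nu}\Phi+\log\det g$, from which
\[
\hat\omega_{BT}=\sqrt{-1}\,\partial\bar\partial\bigl(-\tfrac{1}{\nu}\Phi+\log\det g\bigr)=-\tfrac{1}{\nu}\omega_{-1}+Ric.
\]
The identification parallels (ii) line by line, substituting the graph expansion \eqref{eqb5} of $I^{-1}$ for \eqref{eqb3} and $\log\det g$ for $\log B$; the factor $\det g$ arises as the Jacobian of the K\"ahler volume form entering \eqref{eqber}. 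Equivalently, combining the duality $f_1\star_{BT}f_2=I^{-1}(If_1\star_B If_2)$ with (ii) transfers the potential through the Berezin transform, picking up exactly the Ricci correction.

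The principal obstacle is the combinatorial cancellation in part (ii). Since the sum in \eqref{eqb3} ranges over all strongly connected graphs, antiholomorphic derivatives may insert new vertices on edges; Lemma \ref{matrix} supplies the sign needed to absorb these insertions, and Remark \ref{rm3} ensures we may restrict attention to derivatives landing on vertices. Aligning the graphs produced by $\bar\partial^\beta\partial_k\log B$ with those appearing in the expansion of $I$ then requires an inclusion--exclusion over strongly connected components, of the same flavor as the proof of Lemma \ref{graph3}.
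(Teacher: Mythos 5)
Your proposal follows essentially the same route as the paper: both verify Karabegov's defining equation $u^k\star z^l-z^l\star u^k=\delta^{kl}$ for the explicit candidates $u^k=\partial_k(\tfrac1\nu\Phi+\log B)$ (Berezin) and $u^k=\partial_k(-\tfrac1\nu\Phi+\log\det g)$ (Berezin--Toeplitz, via the opposite, anti-Wick product), using the graph expansions \eqref{eqb3} and \eqref{eqBT} together with Lemma \ref{graph2} and Remark \ref{rm3} to reduce everything to a per-graph cancellation. The only point where the paper is more explicit than your sketch is that cancellation itself: rather than invoking Theorem \ref{ct} and an inclusion--exclusion in the style of \eqref{eqg1}, the paper records it as the factorization $\det(A(\dot H_-)-I)=\det(A(\Gamma_-)-I)\det(A(G)-I)$ (block-triangularity of the adjacency matrix of the glued graph) in the Berezin case, and as the elementary sign identity $|E(\dot H)|=|E(\Gamma)|+2$ in the Berezin--Toeplitz case.
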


\begin{proof}
Let $\omega_{-1}=\sqrt{-1}\partial\bar\partial\Phi$, i.e. $\Phi$ is the potential function for the pseudo-K\"ahler metric $\omega_{-1}$.

First we prove Theorem \ref{thmKS} (i). Since $\hat\omega \vert_U = - \sqrt{-1}\,\bar\partial (\sum_k u^k dz^k)$, it is sufficient to prove that
\begin{equation}
u^k=-\frac1\nu\frac{\partial \Phi}{\partial z^k}
+\frac{\partial\log\det g}{\partial z^k}=-\frac1\nu\frac{\partial \Phi}{\partial z^k}
+g_{i\bar i k}
\end{equation}
satisfy the equation
\begin{multline}
\delta^{kl}=u^k\star_{BT}^{op} z^l-z^l u^k=\sum_{\Gamma\in\mathcal G_{BT} \atop w(\Gamma)>0}\nu^{w(\Gamma)}\frac{(-1)^{|E(\Gamma)|}}{|{\rm Aut}(\Gamma)|}\Gamma^{op}(u^k,z^l)
\\ =\sum_{d=0}^\infty\nu^d \sum_{\Gamma\in\mathcal G_{BT}(d+1)\atop \deg^+(\bullet)=\deg^-(\bullet)=1}\frac{(-1)^{|E(\Gamma)|}}{|{\rm Aut}(\Gamma)|}\Gamma^{op}\left(-\frac{\partial \Phi}{\partial z^k},z^l\right)
\\+\sum_{d=1}^\infty\nu^d \sum_{\Gamma\in\mathcal G_{BT}(d)\atop \deg^-(\bullet)=1}\frac{(-1)^{|E(\Gamma)|}}{|{\rm Aut}(\Gamma)|}\Gamma^{op}\left(g_{i\bar i k},z^l\right)
\end{multline}
for $1\leq k,l\leq n$, where $\star_{BT}^{op}$
is the opposite Berezin-Toeplitz star product of anti-Wick type, i.e. the holomorphic and anti-holomorphic variables
are swapped.

Let us compute the first few coefficients of $u^k\star_{BT} z^l-z^l u^k$ by using \eqref{eqb8}.
It is easy to see that the constant term (the coefficient of $\nu^0$) is equal to
$$-\bigg[\xymatrix{\bullet \ar@(ur,dr)[]^{1}}\bigg]^{op}\left(-\frac{\partial \Phi}{\partial z^k},z^l\right)
=\frac{\partial^2\Phi}{\partial z^k\partial\bar z^l}=\delta^{kl}.$$
The coefficient of $\nu$ (keeping only stable graphs) is equal to
\begin{align*}
-\bigg[\xymatrix{
        *+[o][F-]{1} \ar@/^/[r]^1
         &
        \bullet \ar@/^/[l]^1} \bigg]^{op}\left(-\frac{\partial \Phi}{\partial z^k},z^l\right)
        -&\bigg[\xymatrix{\bullet \ar@(ur,dr)[]^{1}}\bigg]^{op}\left(g_{i\bar i k},z^l\right)\\
&=\bigg[\begin{minipage}{0.5in}$\xymatrix@C=2mm@R=5mm{
         \ar[dr]_{\bar l}
         & & \\
       & *+[o][F-]{1} \ar[ur]_k &}$
         \end{minipage} \bigg]-\bigg[\begin{minipage}{0.5in}$\xymatrix@C=2mm@R=5mm{
         \ar[dr]_{\bar l}
         & & \\
       & *+[o][F-]{1} \ar[ur]_k &}$
         \end{minipage} \bigg]=0.
\end{align*}
The coefficient of $\nu^2$ (keeping only stable graphs) is equal to
\begin{align*}
-\Bigg[\begin{minipage}{0.62in}$\xymatrix@C=2mm@R=5mm{
                & \bullet \ar[dr]^{1}             \\
        *+[o][F-]{1} \ar[ur]^{1}  & &  *+[o][F-]{1} \ar[ll]^{1}
         }$
         \end{minipage} \Bigg]^{op}\left(-\frac{\partial \Phi}{\partial z^k},z^l\right)
        -&\bigg[\xymatrix{
        *+[o][F-]{1} \ar@/^/[r]^1
         &
        \bullet \ar@/^/[l]^1}\bigg]^{op}\left(g_{i\bar i k},z^l\right)\\
&=\Bigg[\begin{minipage}{0.62in}$\xymatrix@C=6mm@R=6mm{
         \ar[d]_{\bar l} &  \\
       *+[o][F-]{1} \ar[r]& *+[o][F-]{1} \ar[u]_k}$
         \end{minipage} \Bigg]-\Bigg[\begin{minipage}{0.62in}$\xymatrix@C=6mm@R=6mm{
         \ar[d]_{\bar l} &  \\
       *+[o][F-]{1} \ar[r]& *+[o][F-]{1} \ar[u]_k}$
         \end{minipage} \Bigg]=0.
\end{align*}

In general, the contribution of $u^k\star_{BT}^{op} z^l-z^l u^k$ to the graph $H$ in Figure \ref{figB}
comes from $\dot H^{op}(-\frac{\partial\Phi}{\partial z^k},z^l)$, where $\dot H\in\mathcal G_{BT}$ is obtained
from $H$ by gluing the head of $k$ and the tail of $\bar l$, and $\Gamma^{op}(g_{i\bar i k},z^l)$,
where $\Gamma$ is a one-pointed graph from Berezin-Toeplitz star product.
So by \eqref{eqb3} and Lemma \ref{graph2}, the coefficient of $H$ is equal to
$$-1\cdot(-1)^{|E(\dot H)|}+(-1)^{|E(\Gamma)|}=0,$$
since $|E(\dot H)|=|E(\Gamma)|+2$. Note that all $H$ that may appear in $u^k\star_{BT}^{op} z^l-z^l u^k$ is of
the form in Figure \ref{figBT}. So we proved $u^k\star_{BT}^{op} z^l-z^l u^k=\delta^{kl}$.
\begin{figure}[h]
$\xymatrix@C=2mm@R=7mm{ &\ar[dl]_{\bar l} &&\\
 *+++[o][F-]{\Gamma} \ar@/^/@{->} @<1pt> [rrr] \ar
@{-->}[rrr] \ar@/_/@{->} @<-1pt> [rrr] &&& *++[o][F-]{1} \ar[ul]_k}$
\caption{A graph $H$ for Berezin-Toeplitz quantization}
\label{figBT}
\end{figure}
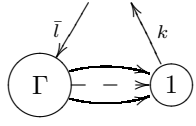

Now we prove Theorem \ref{thmKS} (ii). By \eqref{eqb2}, we need to check that
\begin{equation}
u^k=\frac1\nu\frac{\partial \Phi}{\partial z^k}
+\sum_{G=(V,E)\in\mathcal G_0^{ss}}\nu^{|E|-|V|}\; \frac{-\det(A(G)-I)}{|{\rm Aut}(G)|}\frac{\partial G}{\partial z^k}
\end{equation}
satisfy $u^k\star_B z^l-z^l u^k=\delta^{kl}$ for $1\leq k,l\leq n$.

By \eqref{eqb9}, the constant term (the coefficient of $\nu^0$) in $u^k\star_B z^l-z^l u^k$ is equal to
$$\bigg[\xymatrix{\bullet \ar@(ur,dr)[]^{1}}\bigg]^{op}\left(\frac{\partial \Phi}{\partial z^k},z^l\right)
=\frac{\partial^2\Phi}{\partial z^k\partial\bar z^l}=\delta^{kl}.$$
The coefficient of $\nu$ is vacuously zero.
The coefficient of $\nu^2$ (keeping only stable graphs) is equal to
\begin{multline*}
\left(-\frac12\Bigg[\begin{minipage}{0.6in}$\xymatrix@C=1.5mm@R=5mm{
                & \bullet \ar[dr]^{1}             \\
         \circ \ar[ur]^{1} \ar@/^0.3pc/[rr]^{1} & &  \circ  \ar@/^0.3pc/[ll]^{2}
         }$
         \end{minipage}\Bigg]
-\Bigg[\begin{minipage}{0.65in}
             $\xymatrix@C=1.5mm@R=5mm{
                & \bullet \ar@/^-0.3pc/@<0.2ex>[dl]^{1}             \\
         \circ \ar@/^0.3pc/@<0.7ex>[ur]^{1} \ar@/^-0.3pc/@<0.2ex>[rr]^{1} & & *+[o][F-]{1}
         \ar@/^0.3pc/@<0.7ex>[ll]^{1}
         }$
         \end{minipage}\Bigg]
+\frac12\Bigg[\xymatrix@C=5mm{
        *+[o][F-]{2} \ar@/^/[r]^1
         &
        \bullet \ar@/^/[l]^1} \Bigg]
\right)^{op}\left(\frac{\partial \Phi}{\partial z^k},z^l\right)
\\+\bigg[\xymatrix{\bullet \ar@(ur,dr)[]^{1}}\bigg]^{op}\left(-\frac12\Big[\xymatrix@C=5mm{
        *+[o][F-]{2} \ar[r]^k
         &}\Big]+\bigg[\xymatrix@C=5mm{
        *+[o][F-]{1} \ar@/^/[r]^1
         &
        \circ \ar@/^/[l]^1 \ar[r]^k &}\bigg]+\frac12\bigg[\xymatrix@C=5mm{
        \circ \ar@/^/[r]^2
         &
        \circ \ar@/^/[l]^1 \ar[r]^k &}\bigg],z^l\right),
\end{multline*}
which is easily seen to be zero.

In general, the contribution of $u^k\star_B z^l-z^l u^k$ to the graph $H$ in Figure \ref{figB}
comes from $\dot H^{op}(\frac{\partial\Phi}{\partial z^k},z^l)$, where $\dot H$ is obtained
from $H$ by gluing the head of $k$ and the tail of $\bar l$, and $\Gamma^{op}(\frac{\partial G}{\partial z^k},z^l)$,
where $G$ is a graph from Bergman kernel and $\Gamma$ is a one-pointed graph from Berezin star product.
We may assume that $\frac{\partial}{\partial z^k}$ may act
only on vertices of $G$, since we can extend \eqref{eqb2} to be a summation
over strongly connected graphs (not necessarily semistable). See Remark \ref{rm3}.
So by \eqref{eqb3} and Lemma \ref{graph2}, the coefficient of $H$ is equal to
$$\det(A(\dot H_-)-I)+\det(A(\Gamma_-)-I)\Big(-\det(A(G)-I)\Big)=0.$$
So we proved $u^k\star_B z^l-z^l u^k=\delta^{kl}$.
\begin{figure}[h]
$\xymatrix@C=2mm@R=7mm{ &\ar[dl]_{\bar l} &&\\
 *+++[o][F-]{\Gamma} \ar@/^/@{->} @<1pt> [rrr] \ar
@{-->}[rrr] \ar@/_/@{->} @<-1pt> [rrr] &&& *++[o][F-]{G} \ar[ul]_k}$
\caption{A graph $H$ for Berezin star product}
\label{figB}
\end{figure}
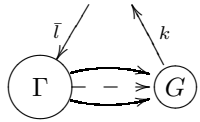
\end{proof}

\begin{remark}\label{rm}
The Karabegov form of an anti-Wick star product has leading term $\frac1\nu\omega_{-1}$ or $-\frac1\nu\omega_{-1}$ depending on
whether its coefficient of $\bigg[\xymatrix{\bullet \ar@(ur,dr)[]^{k}}\bigg]$ is $\frac{1}{k!}$ or $\frac{(-1)^k}{k!}$, in order for \eqref{eqberdef} to hold. In other words,
given an anti-Wick star product on a pseudo-K\"ahler $(M,\omega_{-1})$ with Karabegov form \eqref{eqomega}, we can multiply
$(-1)^k$ on its weight $k$ coefficients to get another anti-Wick star product on $(M,-\omega_{-1})$ with Karabegov form
\begin{equation}\label{eqomega2}
  \hat\omega = -\frac{1}{\nu} \omega_{-1} + \omega_0 - \nu\omega_1 + \nu^2\omega_2 -\nu^3\omega_3
  + \cdots.
\end{equation}
\end{remark}

\vskip 30pt
\section{Karabegov-Bordemann-Waldmann quantization} \label{KBW}

Karabegov-Bordemann-Waldmann (KBW) quantization, denoted by $\star_S$, is also called the standard deformation quantization on K\"ahler manifolds,
i.e. it corresponds to the trivial Karabegov form $\frac1{\nu}\omega_{-1}$ in Karabegov's classification \cite{Kar}. It was
also independently constructed by Bordemann and Waldmann \cite{BW} by modifying Fedosov's geometric construction of
star products on symplectic manifolds \cite{Fed}. The identification
of Bordemann and Waldmann's star product was due to Karabegov \cite{Kar2}. Neumaier \cite{Neu} showed that each star product
 of separation of variables type
can be obtained following Bordemann-Waldmann's construction.

The following theorem is due to Gammelgaard \cite{Gam}. It can also be deduced from recent works of Karabegov \cite{Kar4, Kar5}.
\begin{theorem}[\cite{Gam}]
On a K\"ahler manifold, the KBW star product $\star_S$ is given by
\begin{equation}
f_1\star_{S} f_2(x)=\sum_{\Gamma=(V\cup\{f\},E)\in\mathcal G_S}\nu^{|E|-|V|}\frac{(-1)^{|V|}}{|{\rm Aut}(\Gamma)|}
\Gamma(f_1,f_2)\Big|_x,\label{eqS5}
\end{equation}
where $\mathcal G_S^{scon}$ is the set of strongly connected one-pointed graphs $\Gamma$ such that each
strongly connected component of $\Gamma_-$ is a single vertex without loops, i.e. $\Gamma_-$ is acyclic.
\end{theorem}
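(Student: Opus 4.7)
The plan is to mimic the proof of Theorem \ref{thmKS} and invoke Karabegov's classification \cite{Kar}, by which a Wick-type star product on a pseudo-K\"ahler manifold is uniquely determined by its classifying Karabegov form. Since the KBW star product $\star_S$ is characterized by the trivial form $\hat\omega = \frac{1}{\nu}\omega_{-1}$, it suffices to verify that the right-hand side of \eqref{eqS5} defines an associative star product of Wick type whose anti-Wick opposite has Karabegov form $\frac{1}{\nu}\omega_{-1}$.

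The Wick property is immediate from the formula: if $f_1$ is antiholomorphic or $f_2$ is holomorphic, the outward/inward derivatives imposed by any nontrivial $\Gamma \in \mathcal G_S$ annihilate $f_1$ or $f_2$, so only the trivial graph $\bullet$ contributes and $f_1 \star_S f_2 = f_1 f_2$. Associativity follows by the same three-pointed-graph argument used at the end of \S \ref{BT} for $\star_{BT}$: for each strongly connected three-pointed graph $G$ of one of the two types in Figure \ref{figthreept}, the contributions to $(f\star_S g)\star_S h$ and $f\star_S(g\star_S h)$ are rearranged via Lemma \ref{graph2}, and the condition that $\Gamma_-$ be a DAG is preserved under the subgraph decompositions used there.

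To identify the Karabegov form, I take $u^k = \frac{1}{\nu}\partial_k \Phi$, so that $-\sqrt{-1}\,\bar\partial(u^k\, dz^k) = \frac{1}{\nu}\omega_{-1}$, and verify $u^k \star_S^{op} z^l - z^l u^k = \delta^{kl}$. Multiplying by $\nu$ and cancelling the trivial graph against $-z^l \partial_k\Phi$, this reduces to
$$
\sum_{\Gamma \in \mathcal G_S^{scon},\, w(\Gamma) \geq 1} \nu^{w(\Gamma)}\frac{(-1)^{|V(\Gamma)|}}{|{\rm Aut}(\Gamma)|}\,\Gamma^{op}(\partial_k \Phi, z^l) = \nu\,\delta^{kl},
$$
where I extend the sum from $\mathcal G_S$ to $\mathcal G_S^{scon}$ as permitted by Remark \ref{rm3}, so that derivatives act only on vertices. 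The $w = 1$ term comes solely from the self-loop graph and yields $\nu\,g^{l\bar p}g_{k\bar p} = \nu\,\delta^{kl}$, matching the right-hand side.

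The main obstacle is showing that the $w \geq 2$ contributions vanish. Since $z^l$ is linear, only $\Gamma$ with $\deg^+(\bullet) = 1$ survive. For each Weyl invariant $H$ arising from some $\Gamma^{op}(\partial_k \Phi, z^l)$, I would collect all $\Gamma \in \mathcal G_S^{scon}$ producing $H$; these differ by insertion of transit (degree $(1,1)$ pass-through) vertices on the unique out-edge and on the in-edges of $\bullet$, which by Lemma \ref{matrix} toggle the sign $(-1)^{|V|}$ while preserving $w(\Gamma)$ and interacting with $|{\rm Aut}|$ compatibly with Leibniz. I expect the resulting alternating sum to telescope to zero by an inductive argument parallel to Lemma \ref{graph3}, contracting a source vertex of the DAG $\Gamma_-$ at each step. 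This combinatorial cancellation is the heart of the proof and plays, for KBW, the role that the $\dot H^{op}(\partial_k\Phi,z^l)+\Gamma^{op}(g_{i\bar i k},z^l)=0$ identity plays in the proof of Theorem \ref{thmKS}(i); the absence of an analogue of the $g_{i\bar i k}$ correction term here (since $u^k = \frac{1}{\nu}\partial_k\Phi$ has no $\nu^0$ part) is precisely what forces the vanishing to come from an internal cancellation among $\Gamma$'s rather than between two different families of contributions.
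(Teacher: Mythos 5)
Your overall strategy --- show that the right-hand side of \eqref{eqS5} is an associative star product with separation of variables, compute its Karabegov form via $u^k\star_S^{op}z^l-z^lu^k=\delta^{kl}$, and conclude by Karabegov's classification --- is reasonable and is modelled on the right template. Be aware, though, that the paper itself does not prove this theorem: it is quoted from \cite{Gam} (and \cite{Kar4,Kar5}), and the paper's own computations run in the opposite direction, taking \eqref{eqS2} as given and deriving $I_S^{-1}$ via Lemma \ref{graph3} and the Karabegov form of the \emph{dual} product in Corollary \ref{dS}. So the fair comparison is with the proofs of Theorem \ref{thmKS}(i) and Corollary \ref{dS}, which your outline imitates. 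The Wick-type verification and the choice $u^k=\frac1\nu\partial_k\Phi$ are correct (the sign of the leading term is consistent with Remark \ref{rm}, since the $k$-loop graph in \eqref{eqS5} has coefficient $1/k!$).

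The genuine gap is exactly the step you leave as an expectation, and the mechanism you sketch for it is not the right one. First, extending the sum to $\mathcal G_S^{scon}$ is not ``permitted by Remark \ref{rm3}'' here: that device accounts for derivatives hitting edges via \eqref{eqginv} when graph operators are \emph{composed}, whereas $u^k=\frac1\nu\partial_k\Phi$ is a plain function and no edge derivatives occur. Taken literally, the sum over $\mathcal G_S^{scon}$ is ill-defined: already at weight one, every cycle $\bullet\to v_1\to\cdots\to v_m\to\bullet$ of degree-$(1,1)$ pass-through vertices lies in $\mathcal G_S^{scon}$ and evaluates to $\delta^{kl}$ with sign $(-1)^m$, so your ``$w=1$ term comes solely from the self-loop graph'' already fails and the coefficient becomes a divergent alternating series. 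Second, Lemma \ref{matrix} concerns $\det(A(\Gamma_-)-I)$ and has no bearing on the coefficients $(-1)^{|V|}$. The cancellation that actually works (the analogue of pairing $\dot H^{op}(-\partial_k\Phi,z^l)$ with $\Gamma^{op}(g_{i\bar ik},z^l)$ in the proof of Theorem \ref{thmKS}(i)) is an absorb/un-absorb bijection on the sink of $\Gamma_-$: since $z^l$ forces $\deg^+(\bullet)=1$, a graph $\Gamma\in\mathcal G_S$ with $\deg^-(\bullet)=1$ has a unique sink $B$ of $\Gamma_-$ joined to $\bullet$ by a single edge, and contracting that edge absorbs $B$ into $\bullet$, where $\partial_{\bar q_1}\cdots\partial_{\bar q_j}\partial_k\Phi=g_{k\bar q_1\cdots\bar q_j}$ reproduces the same Weyl invariant from a graph of the same weight with one fewer ordinary vertex, hence opposite sign $(-1)^{|V|}$; Lemma \ref{graph2} takes care of the automorphism factors, and the only unpaired term is the single loop at $\bullet$ (the case $j=1$), which yields precisely $\nu\,\delta^{kl}$. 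You would need to prove this bijection, checking that acyclicity and (semi)stability of $\Gamma_-$ are preserved in both directions. Finally, your associativity argument is a one-line appeal to the $\star_{BT}$ case; it does not transfer verbatim, since one must verify that the subgraph decompositions of Figure \ref{figthreept} preserve the condition that $\Gamma_-$ be a DAG, and the paper's argument there already relies on the extension to $\mathcal G_1^{scon}$ that is delicate for the reasons above.
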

In other words,
the formal Berezin transform of KBW quantization is
\begin{equation} \label{eqS2}
I_S(f)=\sum_{\Gamma=(V\cup\{f\},E)\in\mathcal G_S^{scon}}\nu^{|E|-|V|}\frac{(-1)^{|V|}}{|{\rm Aut}(\Gamma)|}
\Gamma,
\end{equation}

We write out the terms in \eqref{eqS2} up to $\nu^4$ (keeping only stable graphs),
\begin{multline*}
I_S(f)=f+\nu\bigg[\xymatrix{\bullet \ar@(ur,dr)[]^{1}}\bigg]+\frac{\nu^2}{2}\bigg[\xymatrix{\bullet \ar@(ur,dr)[]^{2}}\bigg]
+\nu^3\left(\frac16\bigg[\xymatrix{\bullet \ar@(ur,dr)[]^{3}}\bigg]-\frac14\bigg[\xymatrix{ \circ \ar@/^/[r]^2 & \bullet \ar@/^/[l]^2}\bigg]\right)\\
+\nu^4\left(\frac1{24}\bigg[\xymatrix{\bullet \ar@(ur,dr)[]^{4}}\bigg]-\frac14\bigg[\xymatrix{ \circ \ar@/^/[r]^2 & \bullet\, 1 \ar@/^/[l]^2}\bigg]
-\frac{1}{12}\bigg[\xymatrix{ \circ \ar@/^/[r]^2 & \bullet \ar@/^/[l]^3}\bigg]\right.\\\left.-\frac{1}{12}\bigg[\xymatrix{ \circ \ar@/^/[r]^3 & \bullet \ar@/^/[l]^2}\bigg]
+\frac18\Bigg[\begin{minipage}{0.6in}$\xymatrix@C=2mm@R=5mm{
                & \bullet \ar[dr]^{2}             \\
        \circ \ar[ur]^{2}  & &  \circ \ar[ll]^{2}
         }$
         \end{minipage}\Bigg]\right)+O(\nu^5),
\end{multline*}
which agrees with the computations by Karabegov \cite{Kar5}.

The coefficient of $\nu^5$ in \eqref{eqS2} has 15 terms corresponding to graphs in Table \ref{tb3}.
 \begin{table}[h] \footnotesize
\centering \caption{The 15 graphs in $\mathcal G_S(5)$} \label{tb3}
\begin{tabular}{|c|c|c|c|c|}

\hline $\xymatrix{\bullet\,5} $
     & $\xymatrix{
         \circ \ar@/^/[r]^2
         &
        \bullet\,2 \ar@/^/[l]^2} $
      & $\xymatrix{
         \circ \ar@/^/[r]^2
         &
        \bullet\,1 \ar@/^/[l]^3} $
     &  $\xymatrix{
         \circ \ar@/^/[r]^2
         &
        \bullet \ar@/^/[l]^4} $
     & $\xymatrix{
         \circ \ar@/^/[r]^3
         &
        \bullet\,1 \ar@/^/[l]^2} $
\\
\hline $1/120$ & $-1/8$ & $-1/12$ & $-1/48$ & $-1/12$
\\
\hline $\xymatrix{
         \circ \ar@/^/[r]^3
         &
        \bullet \ar@/^/[l]^3} $
     & \xymatrix{
         \circ \ar@/^/[r]^4
         &
        \bullet \ar@/^/[l]^2}
     & \begin{minipage}{0.6in}
             $\xymatrix@C=3mm@R=6mm{
                & \bullet \ar@/^-0.3pc/@<0.2ex>[dl]^{2} \ar@/^0.3pc/@<0.2ex>[dr]^{1}          \\
         \circ \ar@/^0.3pc/@<0.7ex>[ur]^{1} \ar[rr]_{1} & &
         \circ
         \ar@/^-0.3pc/@<0.7ex>[ul]^{2}
         }$
         \end{minipage}
     &  \begin{minipage}{0.6in}
             $\xymatrix@C=2mm@R=6mm{
                & \bullet\,1 \ar[dl]_{2}            \\
         \circ  \ar[rr]_{2} & & \circ
         \ar[ul]_{2}
         }$
         \end{minipage}
     &   \begin{minipage}{0.6in}
             $\xymatrix@C=3mm@R=6mm{
                & \bullet \ar[dl]_{3}            \\
         \circ  \ar[rr]_{2} & & \circ
         \ar[ul]_{2}
         }$
         \end{minipage}
\\
\hline $-1/36$ & $-1/48$ & $1/4$ & $1/8$ & $1/24$
\\
\hline
       \begin{minipage}{0.6in}
             $\xymatrix@C=3mm@R=6mm{
                & \bullet \ar[dl]_{2} \ar@/^0.3pc/@<0.7ex>[dr]^{1}           \\
         \circ  \ar[rr]_{2} & & \circ
         \ar@/^-0.3pc/@<0.2ex>[ul]^{2}
         }$
         \end{minipage}
  & \begin{minipage}{0.6in}
             $\xymatrix@C=3mm@R=6mm{
                & \bullet \ar@/^-0.3pc/@<0.2ex>[dl]^{2}            \\
         \circ \ar@/^0.3pc/@<0.7ex>[ur]^{1} \ar[rr]_{2} & &
         \circ
         \ar[ul]_{2}
         }$
         \end{minipage}
     & \begin{minipage}{0.6in}
             $\xymatrix@C=3mm@R=6mm{
                & \bullet \ar[dl]_{2}            \\
         \circ  \ar[rr]_{3} & & \circ
         \ar[ul]_{2}
         }$
         \end{minipage}
     &  \begin{minipage}{0.6in}
             $\xymatrix@C=3mm@R=6mm{
                & \bullet \ar[dl]_{2}            \\
         \circ  \ar[rr]_{2} & & \circ
         \ar[ul]_{3}
         }$
         \end{minipage}
     & \begin{minipage}{0.7in}\xymatrix{
  \bullet  \ar[d]_{2}
                &\circ \ar[l]_{2}  \\
\circ   \ar[r]_2
                & \circ    \ar[u]_2       }
                \end{minipage}
\\
\hline $1/8$ & $1/8$ & $1/24$ & $1/24$ & $-1/16$
\\
\hline
\end{tabular}
\end{table}

The dual Karabegov-Bordemann-Waldmann star product $\star_{DS}$ is defined by
\begin{equation}
f_1\star_{DS} f_2=I^{-1}_S(I_S f_2\star_S I_S f_1),
\end{equation}
for any functions $f_1,f_2$.
\begin{theorem}
The formal Berezin transform of the dual KBW quantization is
\begin{equation}\label{eqS3}
I_S^{-1}(f)=\sum_{\Gamma=(V\cup\{f\},E)\in\mathcal G_1^{scon}}\nu^{|E|-|V|}\frac{(-1)^{|E|}}{|{\rm Aut}(\Gamma)|}
\Gamma.
\end{equation}
\end{theorem}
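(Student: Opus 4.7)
The plan is to verify that the operator $J$ defined by the right-hand side of \eqref{eqS3} is a left inverse to $I_S$, i.e., that $J\circ I_S=\mathrm{id}$. Since $I_S$ is a formal operator whose leading term is the identity, establishing this identity will automatically give $J=I_S^{-1}$.

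Applying $I_S$ to a function $f$ produces a sum of partition functions $\Gamma(f)$ indexed by $\Gamma\in\mathcal G_S^{\mathrm{scon}}$, with coefficient $\nu^{w(\Gamma)}(-1)^{|V(\Gamma)|}/|\mathrm{Aut}(\Gamma)|$. Applying $J$ to each such term yields a double sum indexed by pairs $(\Lambda,\Gamma)\in\mathcal G_1^{\mathrm{scon}}\times\mathcal G_S^{\mathrm{scon}}$ of the form $\Lambda(\Gamma(f))$ with coefficient
$$\nu^{w(\Lambda)+w(\Gamma)}\,\frac{(-1)^{|E(\Lambda)|+|V(\Gamma)|}}{|\mathrm{Aut}(\Lambda)|\,|\mathrm{Aut}(\Gamma)|}.$$
By Lemma \ref{graph2} (and Remark \ref{rm3}, which justifies restricting the Leibniz action to vertices after having extended all summations over strongly connected graphs), we may reorganize this double sum by the isomorphism class of the merged graph $G\in\mathcal G_1^{\mathrm{scon}}$, whose weight satisfies $w(G)=w(\Lambda)+w(\Gamma)$. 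The resulting coefficient of $G$ is
$$\frac{\nu^{w(G)}}{|\mathrm{Aut}(G)|}\sum_{\Gamma'\subseteq G}(-1)^{|V(\Gamma')|+|E(G/\Gamma')|},$$
where $\Gamma'$ runs over subgraphs of $G$ (sharing the distinguished vertex) such that $\Gamma'\in\mathcal G_S^{\mathrm{scon}}$ and $G/\Gamma'\in\mathcal G_1^{\mathrm{scon}}$; but since $G$ is strongly connected, the latter condition is automatic as any contraction of a strongly connected graph remains strongly connected.

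Using $|E(G/\Gamma')|=|E(G)|-|E(\Gamma')|$ and $w(\Gamma')=|E(\Gamma')|-|V(\Gamma')|$, the coefficient of $G$ simplifies to
$$\frac{(-1)^{|E(G)|}\nu^{w(G)}}{|\mathrm{Aut}(G)|}\sum_{\Gamma'\in\mathcal S(G)}(-1)^{w(\Gamma')},$$
which vanishes for every nontrivial $G\in\mathcal G_1^{\mathrm{scon}}$ by Lemma \ref{graph3}. Only the trivial graph $G=\bullet$ contributes, producing $f$. Hence $J\circ I_S=\mathrm{id}$ and \eqref{eqS3} is verified.

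The main obstacle will be the careful bookkeeping in the application of Lemma \ref{graph2} to the two-step composition: correctly identifying $\Gamma'$ as the image of $\Gamma$ under substitution, checking that $w$ is additive under gluing, and confirming that no strongly connected graph is missed or double-counted through $\alpha(\Lambda,\Gamma;G)$. Once this combinatorial setup mirrors precisely the analogous step in the proof of \eqref{eqb5}, the sign identity is delivered by Lemma \ref{graph3} with no further work; in this sense the argument is structurally identical to the Berezin-Toeplitz case, with Lemma \ref{graph3} replacing the role played by the vanishing identity \eqref{eqg1} there.
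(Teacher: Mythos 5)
Your proposal is correct and follows essentially the same route as the paper: compose the conjectured $I_S^{-1}$ with $I_S$, reorganize the double sum over pairs of graphs by the merged graph $G$ via Lemma \ref{graph2}, and reduce the vanishing of the coefficient of each nontrivial $G$ to the sign identity of Lemma \ref{graph3} after factoring out $(-1)^{|E(G)|}$. The paper's proof is just a terser version of the same computation.
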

\begin{proof}
By Lemma \ref{graph3}, for any nontrivial strongly connected one-pointed graph $G\in\mathcal G_1^{scon}$, we have
$$\sum_{H=(V\cup\{f\},E)\in \mathcal S(G)}(-1)^{|E(G/H)|}(-1)^{|V|}
=(-1)^{|E(G)|}\sum_{H\in \mathcal S(G)}(-1)^{w(H)}=0.$$
So \eqref{eqS3} follows from \eqref{eqS2}.
\end{proof}

We write out terms of \eqref{eqS3} up to $\nu^3$ (keeping only stable graphs)
\begin{multline*}
I_S^{-1}(f)=f-\nu\bigg[\xymatrix{\bullet
              \ar@(ur,dr)[]^{1}}\bigg]
+\nu^2\left(\frac12\bigg[\xymatrix{\bullet
           \ar@(ur,dr)[]^{2}}\bigg]-\bigg[\xymatrix{
        *+[o][F-]{1} \ar@/^/[r]^1
         &
        \bullet \ar@/^/[l]^1} \bigg]\right)
\\
\nu^3\left(
-\frac{1}{6}\bigg[\xymatrix{\bullet \ar@(ur,dr)[]^{3}}\bigg]
+\bigg[\xymatrix{
        *+[o][F-]{1} \ar@/^/[r]^1
         &
        \bullet\, 1 \ar@/^/[l]^1} \bigg]
+\frac14\bigg[\xymatrix{ \circ \ar@/^/[r]^2 & \bullet \ar@/^/[l]^2
         }\bigg]\right.
\\
+\frac12\bigg[\xymatrix{
        *+[o][F-]{1} \ar@/^/[r]^1
         &
        \bullet \ar@/^/[l]^2} \bigg]
+\frac12\bigg[\xymatrix{
        *+[o][F-]{1} \ar@/^/[r]^2
         &
        \bullet \ar@/^/[l]^1} \bigg]
+\frac12\bigg[\xymatrix{
        *+[o][F-]{2} \ar@/^/[r]^1
         &
        \bullet \ar@/^/[l]^1} \bigg]
\\\left.-\frac12\Bigg[\begin{minipage}{0.6in}$\xymatrix@C=2mm@R=5mm{
                & \bullet \ar[dr]^{1}             \\
         \circ \ar[ur]^{1} \ar@/^0.3pc/[rr]^{1} & &  \circ  \ar@/^0.3pc/[ll]^{2}
         }$
         \end{minipage}\Bigg]
-\Bigg[\begin{minipage}{0.65in}
             $\xymatrix@C=2mm@R=5mm{
                & \bullet \ar@/^-0.3pc/@<0.2ex>[dl]^{1}             \\
         \circ \ar@/^0.3pc/@<0.7ex>[ur]^{1} \ar@/^-0.3pc/@<0.2ex>[rr]^{1} & & *+[o][F-]{1}
         \ar@/^0.3pc/@<0.7ex>[ll]^{1}
         }$
         \end{minipage}\Bigg]
-\Bigg[\begin{minipage}{0.6in}$\xymatrix@C=2mm@R=5mm{
                & \bullet \ar[dr]^{1}             \\
        *+[o][F-]{1} \ar[ur]^{1}  & &  *+[o][F-]{1} \ar[ll]^{1}
         }$
         \end{minipage}\Bigg]\right)
        +O(\nu^4).
\end{multline*}

\begin{corollary}\label{dS} The Karabegov form of $\star_{DS}$ is
\begin{equation}
-\frac{1}{\nu}\omega_{-1}+Ric+\sqrt{-1}\partial\bar\partial\sum_{G\in\mathcal G_0^{ss}}\nu^{w(G)}\frac{(-1)^{|E(G)|+1}}{|{\rm Aut}(G)|}
G,
\end{equation}
where $G$ runs over all strongly connected semistable graphs.
\end{corollary}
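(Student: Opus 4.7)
The plan is to follow the verification strategy of Theorem~\ref{thmKS}, adapting it to $\star_{DS}$. Writing $\omega_{-1} = \sqrt{-1}\,\partial\bar\partial\Phi$, the claimed Karabegov form is equivalent to $\hat\omega|_U = -\sqrt{-1}\,\bar\partial\bigl(\sum_k u^k\, dz^k\bigr)$ with
\begin{equation*}
u^k = -\frac{1}{\nu}\frac{\partial\Phi}{\partial z^k} + g_{i\bar i k} + \sum_{G\in\mathcal G_0^{ss}}\nu^{w(G)}\frac{(-1)^{|E(G)|+1}}{|\mathrm{Aut}(G)|}\frac{\partial G}{\partial z^k}.
\end{equation*}
By Karabegov's classification it then suffices to verify the identity $u^k\star_{DS} z^l - z^l u^k = \delta^{kl}$ for each $k,l$ (using $\star_{DS}$ in its anti-Wick form, exactly as $\star_B$ is used in the proof of Theorem~\ref{thmKS}(ii) and $\star_{BT}^{op}$ in part~(i)).

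First I would verify the $\nu^0$ equation $\partial_k\partial_{\bar l}\Phi = \delta^{kl}$, which falls out from the unique weight-one graph $\bigl[\xymatrix{\bullet\ar@(ur,dr)[]^{1}}\bigr]$ of $I_S^{-1}$ (coefficient $-1$) in exactly the way the constant term is handled in Theorem~\ref{thmKS}(i). For each higher-order graph $H$ that appears, I would appeal to the decomposition of Figure~\ref{figB}: $H$ is a sub-one-pointed piece $\Gamma\in\mathcal G_1^{scon}$ joined by the $\bar l$-edge, the $k$-edge and one-directional cross-edges to an unpointed piece taken either as some $G\in\mathcal G_0^{ss}$ or, for the Ricci contribution, as the single-loop vertex $\bigl[\xymatrix{*+[o][F-]{1}}\bigr]$. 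By Lemma~\ref{graph2} and \eqref{eqS3}, the coefficient of $H$ in $u^k\star_{DS} z^l - z^l u^k$ collects three contributions: (a) from pairing $-\partial_k\Phi/\nu$ with the contracted graph $\dot H$, with sign $-(-1)^{|E(\dot H)|}$; (b) from pairing the Ricci term $g_{i\bar i k}$ with a $\Gamma$; and (c) from pairing $\partial_k G$ with a $\Gamma$ for each $G\in\mathcal G_0^{ss}$, with sign $(-1)^{|E(\Gamma)|+|E(G)|+1}$.

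The hard part will be the graph-theoretic identity that forces these contributions to cancel for every nontrivial $H$. This is the analogue of the determinant factorization $\det(A(\dot H_-)-I) = \det(A(\Gamma_-)-I)\det(A(G)-I)$ used in the proof of Theorem~\ref{thmKS}(ii), which held because the cross-edges between $\Gamma$ and $G$ inside $\dot H_-$ are one-directional. Here the coefficient structure $(-1)^{|E|}/|\mathrm{Aut}|$ of $I_S^{-1}$ is simpler, so the required identity reduces to a parity statement on the connecting edges of $\dot H$; the coefficient $(-1)^{|E(G)|+1}$ appearing in the correction $F$ has been rigged precisely so that, after summing over all decompositions of $H$ into $(\Gamma, G)$ via Leibniz on $\partial_k G$, these parities add up to cancel contribution (a) against the sum of (c). Pinning down this combinatorial bookkeeping — in particular, keeping track of the one-directional cross-edge parity and the Ricci case (b) as the specialization $G = \bigl[\xymatrix{*+[o][F-]{1}}\bigr]$ of (c) — is the main technical step; once it is in hand, the Karabegov equation is verified and Corollary~\ref{dS} follows at once.
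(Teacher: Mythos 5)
Your setup coincides with the paper's: the same ansatz for $u^k$, the same reduction to verifying $u^k\star_{DS}z^l-z^lu^k=\delta^{kl}$, the same decomposition of a contributing graph $H$ as in Figure \ref{figDS}, and the correct signs $-(-1)^{|E(\dot H)|}$ for the $\Phi$-contribution and $(-1)^{|E(\Gamma)|}(-1)^{|E(G)|+1}$ for the $G$-contribution. Your observation that the Ricci term is the specialization of (c) to the single one-loop vertex is also consistent with the paper (that vertex is not semistable, hence listed separately from $\mathcal G_0^{ss}$, but it carries the same sign $(-1)^{|E(G)|+1}=+1$).

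The gap is that you stop exactly at the step that constitutes the proof: you describe the cancellation as a piece of ``combinatorial bookkeeping'' to be pinned down ``after summing over all decompositions of $H$ into $(\Gamma,G)$,'' and you do not carry it out. In fact no sum over decompositions occurs. Since $\partial G/\partial z^k$ sits in the inward slot of $\Gamma^{op}(\cdot,z^l)$ and derivatives act only on vertices of $G$ (Remark \ref{rm3}), every cross-edge of $\dot H_-$ points into $G$; hence $G$ is forced to be the unique sink strongly connected component of $\dot H_-$ containing the tail of the $k$-edge, and the pair $(\Gamma,G)$ is determined by $H$. The ``parity statement'' you defer is then the single identity $|E(\dot H)|=|E(\Gamma)|+|E(G)|+1$, which gives
\begin{equation*}
(-1)\cdot(-1)^{|E(\dot H)|}+(-1)^{|E(\Gamma)|}\cdot(-1)^{|E(G)|+1}=0
\end{equation*}
for each $H$, with Lemma \ref{graph2} matching the automorphism factors. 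Without the uniqueness of the decomposition and this edge count, the claimed cancellation is asserted rather than proved; with them, your argument closes and agrees with the paper's.
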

\begin{proof}
By \eqref{eqS3}, it is sufficient to prove that
\begin{align}
u_k&=-\frac1\nu\frac{\partial \Phi}{\partial z^k}
+\frac{\partial\log\det g}{\partial z^k}+\sum_{G\in\mathcal G_0^{ss}}\nu^{w(G)}\frac{(-1)^{|E(G)|+1}}{|{\rm Aut}(G)|}\frac{\partial G}{\partial z^k}\\
&=-\frac1\nu\frac{\partial \Phi}{\partial z^k}
+g_{i\bar i k}+\sum_{G\in\mathcal G_0^{ss}}\nu^{w(G)}\frac{(-1)^{|E(G)|+1}}{|{\rm Aut}(G)|}\frac{\partial G}{\partial z^k}
\end{align}
satisfy the equation
\begin{multline}
\delta^{kl}=u^k\star_{DS} z^l-z^l u^k=\sum_{\Gamma\in\mathcal G_1 \atop w(\Gamma)>0}\nu^{w(\Gamma)}\frac{(-1)^{|E(\Gamma)|}}{|{\rm Aut}(\Gamma)|}\Gamma^{op}(u^k,z^l)
\\ =\sum_{d=0}^\infty\nu^d \sum_{\Gamma\in\mathcal G_1(d+1)\atop \deg^+(\bullet)=\deg^-(\bullet)=1}\frac{(-1)^{|E(\Gamma)|}}{|{\rm Aut}(\Gamma)|}\Gamma^{op}\left(-\frac{\partial \Phi}{\partial z^k},z^l\right)
\\+\sum_{d=1}^\infty\nu^d \sum_{\Gamma\in\mathcal G_1(d)\atop \deg^-(\bullet)=1}
\frac{(-1)^{|E(\Gamma)|}}{|{\rm Aut}(\Gamma)|}\Gamma^{op}\left(g_{i\bar i k},z^l\right)
\\+\sum_{d=2}^\infty\nu^d \sum_{t=1}^{d-1} \sum_{\Gamma\in\mathcal G_1(t)\atop \deg^-(\bullet)=1}\sum_{G\in \mathcal G_0^{ss}(d-t)}
\frac{(-1)^{|E(\Gamma)|}}{|{\rm Aut}(\Gamma)|}\Gamma^{op}\left(\frac{(-1)^{E(G)+1}}{|{\rm Aut}(G)|}\frac{\partial G}{\partial z^k},z^l\right).
\end{multline}

Note that we have
\begin{equation}
\sum_{G\in\mathcal G_0^{ss}}\nu^{w(G)}\frac{(-1)^{|E(G)|+1}}{|{\rm Aut}(G)|}
G=1+\nu\left(-\frac{1}{2}\big[\xymatrix{*+[o][F-]{2}}\big]+\frac12\bigg[\xymatrix{ \circ  \ar@/^/[r]^2 & \circ \ar@/^/[l]^1
         }\bigg]\right)+O(\nu^2).\label{eqS4}
\end{equation}

It is easy to see that the constant term (the coefficient of $\nu^0$) is equal to
$$-\bigg[\xymatrix{\bullet \ar@(ur,dr)[]^{1}}\bigg]^{op}\left(-\frac{\partial \Phi}{\partial z^k},z^l\right)
=\frac{\partial^2\Phi}{\partial z^k\partial\bar z^l}=\delta^{kl}.$$
The coefficient of $\nu$ (keeping only stable graphs) is equal to
\begin{align*}
-\bigg[\xymatrix{
        *+[o][F-]{1} \ar@/^/[r]^1
         &
        \bullet \ar@/^/[l]^1} \bigg]^{op}\left(-\frac{\partial \Phi}{\partial z^k},z^l\right)
        -&\bigg[\xymatrix{\bullet \ar@(ur,dr)[]^{1}}\bigg]^{op}\left(g_{i\bar i k},z^l\right)\\
&=\bigg[\begin{minipage}{0.5in}$\xymatrix@C=2mm@R=5mm{
         \ar[dr]_{\bar l}
         & & \\
       & *+[o][F-]{1} \ar[ur]_k &}$
         \end{minipage} \bigg]-\bigg[\begin{minipage}{0.5in}$\xymatrix@C=2mm@R=5mm{
         \ar[dr]_{\bar l}
         & & \\
       & *+[o][F-]{1} \ar[ur]_k &}$
         \end{minipage} \bigg]=0.
\end{align*}

The coefficient of $\nu^2$ (keeping only stable graphs) is equal to
\begin{multline*}
\left(\frac12\Bigg[\begin{minipage}{0.6in}$\xymatrix@C=1.5mm@R=5mm{
                & \bullet \ar[dr]^{1}             \\
         \circ \ar[ur]^{1} \ar@/^0.3pc/[rr]^{1} & &  \circ  \ar@/^0.3pc/[ll]^{2}
         }$
         \end{minipage}\Bigg]
+\Bigg[\begin{minipage}{0.65in}
             $\xymatrix@C=1.5mm@R=5mm{
                & \bullet \ar@/^-0.3pc/@<0.2ex>[dl]^{1}             \\
         \circ \ar@/^0.3pc/@<0.7ex>[ur]^{1} \ar@/^-0.3pc/@<0.2ex>[rr]^{1} & & *+[o][F-]{1}
         \ar@/^0.3pc/@<0.7ex>[ll]^{1}
         }$
         \end{minipage}\Bigg]
-\frac12\Bigg[\xymatrix@C=5mm{
        *+[o][F-]{2} \ar@/^/[r]^1
         &
        \bullet \ar@/^/[l]^1} \Bigg]
+\Bigg[\begin{minipage}{0.6in}$\xymatrix@C=1.2mm@R=5mm{
                & \bullet \ar[dr]^{1}             \\
        *+[o][F-]{1} \ar[ur]^{1}  & &  *+[o][F-]{1} \ar[ll]^{1}
         }$
         \end{minipage} \Bigg]\right)^{op}\left(\frac{\partial \Phi}{\partial z^k},z^l\right)
\\-\bigg[\xymatrix{
        *+[o][F-]{1} \ar@/^/[r]^1
         &
        \bullet \ar@/^/[l]^1}\bigg]^{op}\left(\Big[\xymatrix@C=5mm{
        *+[o][F-]{1} \ar[r]^k
         &}\Big],z^l\right)
\\+\bigg[\xymatrix{\bullet \ar@(ur,dr)[]^{1}}\bigg]^{op}\left(\frac12\Big[\xymatrix@C=5mm{
        *+[o][F-]{2} \ar[r]^k
         &}\Big]-\bigg[\xymatrix@C=5mm{
        *+[o][F-]{1} \ar@/^/[r]^1
         &
        \circ \ar@/^/[l]^1 \ar[r]^k &}\bigg]-\frac12\bigg[\xymatrix@C=5mm{
        \circ \ar@/^/[r]^2
         &
        \circ \ar@/^/[l]^1 \ar[r]^k &}\bigg],z^l\right),
\end{multline*}
which is easily seen to be zero.

In general, the contribution of $u^k\star_{DS} z^l-z^l u^k$ to the graph $H$ in Figure \ref{figDS}
comes from $\dot H^{op}(\frac{\partial\Phi}{\partial z^k},z^l)$, where $\dot H$ is obtained
from $H$ by gluing the head of $k$ and the tail of $\bar l$, and $\Gamma^{op}(\frac{\partial G}{\partial z^k},z^l)$,
where $G$ is the unique sink of $\dot H_-$ containing the tail of $k$ and $\Gamma$ is a one-pointed graph. We may assume that $\frac{\partial}{\partial z^k}$ may act
only on vertices of $G$, since we can extend \eqref{eqS4} to be a summation
over strongly connected graphs (not necessarily semistable). See Remark \ref{rm3}.
So by \eqref{eqS3} and Lemma \ref{graph2}, the coefficient of $H$ is equal to
$$(-1)\cdot(-1)^{|E(\dot H)|}+(-1)^{|E(\Gamma)|}\cdot(-1)^{|E(G)|+1}=0,$$
since $|E(\dot H)|=|E(\Gamma)|+|E(G)|+1$. So we proved $u^k\star_{DS} z^l-z^l u^k=\delta^{kl}$.
\begin{figure}[h]
$\xymatrix@C=2mm@R=7mm{ &\ar[dl]_{\bar l} &&\\
 *+++[o][F-]{\Gamma} \ar@/^/@{->} @<1pt> [rrr] \ar
@{-->}[rrr] \ar@/_/@{->} @<-1pt> [rrr] &&& *++[o][F-]{G} \ar[ul]_k}$
\caption{A graph $H$ for dual KBW star product}
\label{figDS}
\end{figure}
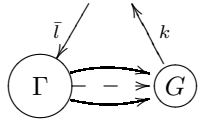
\end{proof}

$$ \ \ \ \ $$

\

\end{document}